\declaretheorem[name=Theorem,numberwithin=section]{thm}
\declaretheorem[name=Remark,style=remark,sibling=thm]{rem}
\declaretheorem[name=Lemma,sibling=thm]{lemma}
\declaretheorem[name=Proposition,sibling=thm]{prop}
\declaretheorem[name=Corollary,sibling=thm]{cor}
\numberwithin{equation}{section}
\crefname{lemma}{Lemma}{Lemmata}
\crefname{prop}{Proposition}{Propositions}
\crefname{thm}{Theorem}{Theorems}
\crefname{cor}{Corollary}{Corollaries}
\crefname{defn}{Definition}{Definitions}
\crefname{example}{Example}{Examples}
\crefname{rem}{Remark}{Remarks}
\crefname{assum}{Assumption}{Assumptions}
\crefname{nota}{Notation}{Notation}
\newcommand{\ti}{\tilde}
\newcommand{\bs}{\backslash}
\newcommand{\cn}{\colon}
\newcommand{\sub}{\subset}
\newcommand{\bbR}{\mathbb{R}}
\newcommand{\bbS}{\mathbb{S}}
\newcommand{\8}{\infty}
\newcommand{\al}{\alpha}
\newcommand{\be}{\beta}
\newcommand{\ga}{\gamma}
\newcommand{\de}{\delta}
\newcommand{\ep}{\varepsilon}
\newcommand{\ka}{\kappa}
\newcommand{\la}{\lambda}
\newcommand{\om}{\omega}
\newcommand{\si}{\sigma}
\newcommand{\ph}{\varphi}
\newcommand{\Ga}{\Gamma}
\newcommand{\La}{\Lambda}
\newcommand{\cL}{\mathcal{L}}
\newcommand{\cF}{\mathcal{F}}
\newcommand{\del}{\partial}
\newcommand{\n}{\nabla}
\newcommand{\fa}{\forall}
\newcommand{\rt}{\sqrt}
\newcommand{\ip}[2]{\left\langle #1,#2 \right\rangle}
\newcommand{\fr}[2]{\frac{#1}{#2}}
\newcommand{\x}{\times}
\DeclareMathOperator{\const}{const}
\newcommand{\pf}[1]{\begin{proof}#1 \end{proof}}
\newcommand{\eq}[1]{\begin{equation}\begin{alignedat}{2} #1 \end{alignedat}\end{equation}}
\newcommand{\br}[1]{\left(#1\right)}
\newcommand{\abs}[1]{\lvert #1\rvert}
\newcommand{\enum}[1]{\begin{enumerate}[(i)] #1 \end{enumerate}}
\newcommand{\enu}[1]{\begin{enumerate}[(a)] #1 \end{enumerate}}
\newcommand{\ra}{\rightarrow}
\newcommand{\hra}{\hookrightarrow}
\newcommand{\mrm}{\mathrm}
\newcommand{\q}{\quad}
\begin{document}

\title[]{Christoffel-Minkowski flows}
\author[P. Bryan, M. N. Ivaki, J. Scheuer]{Paul Bryan, Mohammad N. Ivaki, Julian Scheuer}
\dedicatory{}
\subjclass[2010]{}
\keywords{Christoffel--Minkowski problem; Curvature flow}
\date{\today}
\begin{abstract}
We provide a curvature flow approach to the regular Christoffel--Minkowski problem. The speed of our curvature flow is of an entropy preserving type and contains a global term.
\end{abstract}
\maketitle
\tableofcontents
\section{Introduction}

The regular Christoffel--Minkowski problem asks for sufficient and necessary conditions on a positive function $\varphi\in C^{\8}(\bbS^{n})$ such that there exists a smooth strictly convex body $K$ with support function $s$ satisfying
\eq{\label{eq:CMP}p_k(\bar \n^{2}s+s\bar g )=\varphi.}
Here $\bar g$ is the round metric on $\bbS^{n}$, $\bar \n$ its Levi-Civita connection and $p_{k}$ is the $k$-th elementary symmetric polynomial of the eigenvalues of its argument, which in the case of \eqref{eq:CMP} are the principal radii of curvature for the boundary of $K,$ $\del K$. Below we will introduce this problem in further detail. In \cite[Thm.~1.3]{Guan2003a}, this problem was solved with the help of a sophisticated constant rank theorem under the following hypotheses:

\enum{
\item \label{invariance}\eq{\int_{\bbS^{n}}u\varphi(u)d\om=0,}
\item \label{weak-convex}\eq{\bar \n^{2}\varphi^{-\frac{1}{k}}+\varphi^{-\frac{1}{k}}\bar g\geq  0,}
\item \label{homotopy} There is a continuous (with respect to the $C^{\8}$-norm) homotopy from $\ph$ to the constant one function, which respects \eqref{weak-convex}.
}
Here $d\om$ is the standard volume form of $\mathbb{S}^n$.
The first of these assumptions is necessary due to the translation invariance of the problem. Assumption \eqref{weak-convex} is sufficient but not necessary; see below. In \cite{Guan2003a} a flow approach by Andrews and the authors was announced for removing \eqref{homotopy}; however, the work has never appeared. A direct verification for the validity of \eqref{homotopy} was presented in \cite{Sheng2004}.

Since the elliptic method of \cite{Guan2003a} was introduced, it has remained an intriguing question whether these problems can be solved with a curvature flow as well. To our knowledge, except in case of the Minkowski problem where $k=n$, (cf., \cite{Chou2000}), such an approach is missing. The purpose of this paper is to fill this gap and  to reprove the results from \cite{Guan2003a, Sheng2004} with the help of a carefully designed curvature flow.

Our proof has two main advantages compared to the existing ones: First, in the case that \eqref{weak-convex} holds with strict inequality, we do not make use of the constant rank theorem. Second, a byproduct of our proof is an improvement and a simplification of how \eqref{homotopy} can be removed.

The idea of this paper is inspired by the literature on volume preserving curvature flows. Under the strong convexity assumption, i.e., \eqref{weak-convex} holds with the strict sign, we consider a curvature flow whose fixed points are the solutions to the Christoffel--Minkowski problem. The full result then follows from a simple approximation argument. The main result of this paper is as follows:

\begin{thm}[Christoffel--Minkowski flow]\label{CM-flow}
Suppose $1\leq k\leq n$ and $0<\ph\in C^{\8}(\bbS^{n})$ satisfies
\enu{
\item\eq{\label{integral} \int_{\mathbb{S}^n}u\varphi(u)d\omega=0,}
\item \eq{\label{strict-convex}\bar \n^{2}\varphi^{-\frac{1}{k}}+\varphi^{-\frac{1}{k}}\bar g>  0.}
}
Then for an arbitrary strictly convex initial hypersurface $M_{0}=x(0,\bbS^{n})$, there exists a unique family of embeddings
\eq{x\cn [0,\8)\x\bbS^{n}\ra \bbR^{n+1},}
which satisfies the constrained curvature flow
\eq{\label{CM-flow-2}\dot{x}=\br{\fr{\int_{\bbS^{n}}p_{k}^{\fr{k-1}{k}}d\om}{\int_{\bbS^{n}}\ph^{-\fr 1k}p_{k}d\om}\ph^{-\fr 1k}(\nu)-p_{k}^{-\fr 1k}(\la)}\nu.}
Here $\la=(\la_{i})$ are the principal radii of curvature and $\nu$ is the outward pointing normal. If $C(K_{t})$ denotes the centroid of the convex body bounded by
\eq{M_{t}=x(t,\bbS^{n})=\del K_{t},}
then the embeddings
\eq{\ti x(t,\cdot)=x(t,\cdot)-C(K_{t})}
converge smoothly to a solution of the Christoffel--Minkowski problem.
\end{thm}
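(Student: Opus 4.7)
The plan is to analyze the flow as a scalar parabolic equation on $\bbS^{n}$. Parametrizing $M_{t}$ by its support function $s(t,\cdot)$ via the inverse Gauss map and writing $W_{ij}:=\bar\n_{i}\bar\n_{j}s+s\,\bar g_{ij}$ and $F:=p_{k}^{1/k}(W)$, the flow \eqref{CM-flow-2} becomes
\[
\partial_{t} s \;=\; C(t)\,\ph^{-1/k}-F^{-1},\q C(t):=\fr{\int_{\bbS^{n}}F^{k-1}d\om}{\int_{\bbS^{n}}\ph^{-1/k}F^{k}d\om}.
\]
For strictly convex $M_{0}$ one has $W>0$; on the positive cone $F$ is concave with positive-definite linearization $F^{ij}$, so $-F^{-1}$ defines a uniformly parabolic, concave fully-nonlinear operator, and standard parabolic theory yields a short-time smooth solution as long as $W>0$.

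The core step is a long-time a priori estimate that replaces the constant rank theorem of Caffarelli--Guan--Ma used in \cite{Guan2003a}. Set $g:=F^{-1}$ and
\[
Q_{ij}\;:=\;\bar\n_{i}\bar\n_{j}g+g\,\bar g_{ij},\q \ti M_{ij}\;:=\;\bar\n_{i}\bar\n_{j}\ph^{-1/k}+\ph^{-1/k}\bar g_{ij};
\]
hypothesis (b) says $\ti M>0$, and any stationary $s$ satisfies $g=C_{\infty}^{-1}\ph^{-1/k}$ so that $Q_{\infty}=C_{\infty}^{-1}\ti M>0$ automatically. I would show by Hamilton's tensor maximum principle that $Q>0$ is preserved along the flow. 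A direct computation yields
\[
\partial_{t}g\;=\;-F^{-2}F^{ij}\br{\bar\n_{i}\bar\n_{j}\dot s+\dot s\,\bar g_{ij}}\;=\;-F^{-2}F^{ij}\br{C\,\ti M_{ij}-Q_{ij}},
\]
and differentiating $Q_{ij}$ in time (using the Codazzi identity for $W$ on $\bbS^{n}$ and commuting covariant derivatives with the attendant spherical curvature corrections) produces an evolution
\[
\partial_{t}Q_{ij}\;=\;F^{-2}F^{kl}\bar\n_{k}\bar\n_{l}Q_{ij}+L(Q,\bar\n Q)+R_{ij},
\]
where the reaction $R_{ij}$ is dominated by $C\,\ti M_{ij}>0$. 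Evaluated on a null eigendirection $v$ of $Q$ at a first touching time, the elliptic term is nonnegative by the second-order conditions, and a careful sign analysis of $R(v,v)$ using the concavity of $F$ and the positivity of $\ti M$ should give $\partial_{t}Q(v,v)\geq 0$. Controlling the algebraic reaction against the potentially unfavorable contribution $F^{-2}F^{ij}Q_{ij}$ is the main obstacle and the genuinely new ingredient replacing the elliptic constant rank theorem.

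With $Q>0$ in hand, uniform two-sided $C^{0}$ bounds on $s$ and on $F$ follow: an upper bound on $s$ by comparison with a large evolving sphere, and a lower bound together with the bounds on $F$ by combining $Q>0$ (which forces $g=F^{-1}$ to be the support function of a convex body, hence bounded) with the preservation of an entropy-type integral built into the global factor $C(t)$, the natural candidate being $\int_{\bbS^n}F^{k-1}d\om$. Krylov--Safonov H\"older estimates and the Evans--Krylov theorem for concave parabolic operators then upgrade this to uniform $C^{2,\al}$ bounds, and a standard bootstrap yields uniform $C^{\infty}$ estimates; long-time existence follows by continuation.

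For convergence of $\ti x(t,\cdot)=x(t,\cdot)-C(K_{t})$, a monotone Lyapunov functional whose critical points on the preserved-entropy level set are precisely support functions of solutions of \eqref{eq:CMP} forces $\partial_{t}s\ra 0$ in $C^{\infty}$. Any subsequential limit $s_{\infty}$ is stationary, so $C_{\infty}\ph^{-1/k}=F^{-1}(W_{\infty})$, i.e., $p_{k}(\bar\n^{2}s_{\infty}+s_{\infty}\bar g)=C_{\infty}^{-k}\ph$, a (possibly rescaled) Christoffel--Minkowski solution. Centroid subtraction absorbs the translation freedom permitted by hypothesis (a), and uniqueness of the CM solution up to translation and dilation promotes subsequential to full smooth convergence.
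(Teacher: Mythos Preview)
Your central a priori estimate does not get off the ground: the theorem allows an \emph{arbitrary} strictly convex initial hypersurface $M_{0}$, and for such a hypersurface there is no reason that $Q_{ij}=\bar\n_{i}\bar\n_{j}(p_{k}^{-1/k})+p_{k}^{-1/k}\bar g_{ij}$ is positive at $t=0$. The tensor maximum principle only propagates positivity that is already present, so your scheme would at best prove a weaker statement for specially prepared initial data. The paper instead applies Andrews' tensor maximum principle to the pinching tensor $S_{ij}=h_{ij}-\ep Hg_{ij}$ on the hypersurface side, which \emph{is} positive at $t=0$ for small $\ep$ simply by compactness and strict convexity of $M_{0}$. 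The strict convexity hypothesis on $\ph^{-1/k}$ enters only in the zero-order reaction term and makes the null-eigenvector computation go through; the concavity/inverse-concavity needed for the gradient terms comes from the structure of $F=(\si_{n}/\si_{n-k})^{1/k}$. This pinching, not any control of $Q$, is what feeds all later estimates: bounds on $\mu$, the upper bound on $F$ via the auxiliary function $F/(\ip{x-x_{0}}{\nu}-\de)$, and eventually the uniform lower bound on $F$ via a weak Harnack argument. Even setting aside the initial condition, you acknowledge that the sign analysis of the reaction in $\partial_{t}Q$ is the ``main obstacle'' and do not carry it out; given that this would be a genuinely new parabolic constant-rank-type statement, it cannot be left as a sketch.

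There is a second gap in the regularity step. Because the flow carries the global factor $C(t)$, the scalar equation for $s$ (or $\rho$) has a right-hand side whose $t$-dependence is not a priori controlled, and one cannot invoke parabolic Krylov--Safonov or Evans--Krylov directly: those estimates require control of $\partial_{t}\Phi$, which here involves $\dot C$ and hence third derivatives of $s$. The paper circumvents this by first proving H\"older continuity of $F$ from its own evolution equation (whose coefficients are already bounded by the pinching and $C^{2}$ estimates), then obtaining spatial $C^{2,\al}$ regularity of $\rho$ at each fixed time via Caffarelli--Cabr\'e, and bootstrapping. Finally, the quantity preserved by the flow is $\int_{\bbS^{n}}sp_{k}\,d\om$, not $\int_{\bbS^{n}}p_{k}^{(k-1)/k}\,d\om$, and the monotone Lyapunov functional is $\int_{\bbS^{n}}s\ph\,d\om$, whose monotonicity is a consequence of Andrews' generalized H\"older inequality rather than an obvious computation.
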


In \Cref{completion} we will weaken the strict convexity assumption on $\ph$ and, in conjunction with \Cref{CM-flow}, reprove the following result.

\begin{cor}[\cite{Guan2003a, Sheng2004}]\label{CMP}
Let $1\leq k< n $ and $0<\varphi\in C^{\infty}(\mathbb{S}^n)$ satisfy
\eq{\bar \n^{2}\varphi^{-\frac{1}{k}}+\varphi^{-\frac{1}{k}}\bar g\geq  0,\quad \int_{\mathbb{S}^n}u\varphi(u)d\omega=0.}
Then there exists a smooth, strictly convex solution to
\eq{p_k(\bar \n^{2}s+s\bar g )=\varphi.}
\end{cor}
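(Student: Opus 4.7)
The plan is to reduce the corollary to \Cref{CM-flow} by approximation. I would perturb $\ph$ to a family $\ph_\ep\in C^{\8}(\bbS^n)$ satisfying both the moment condition \eqref{integral} and the strict version of \eqref{strict-convex}, apply \Cref{CM-flow} to produce a smooth strictly convex solution for each $\ep$, normalise by translation, and pass to the limit. The only subtlety at the end is extracting strict convexity of the limiting body, which will require the constant rank theorem of Caffarelli--Guan--Ma.

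For the perturbation, set $f:=\ph^{-1/k}$, so that by hypothesis $\bar\n^2 f+f\bar g\geq 0$. For $\ep>0$ and $b\in\bbR^{n+1}$ consider $f_{\ep,b}:=f+\ep(1+\langle b,u\rangle)$ and $\ph_{\ep,b}:=f_{\ep,b}^{-k}$. Because the restriction to $\bbS^n$ of any linear function on $\bbR^{n+1}$ lies in the kernel of $\bar\n^2+\bar g\cdot$, we have $\bar\n^2 f_{\ep,b}+f_{\ep,b}\bar g=(\bar\n^2 f+f\bar g)+\ep\bar g\geq\ep\bar g>0$ uniformly in $b$, so $\ph_{\ep,b}$ already satisfies the strict form of \eqref{strict-convex}. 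To enforce \eqref{integral}, expand $\int_{\bbS^n}u\ph_{\ep,b}d\om=-k\ep A-k\ep Mb+O(\ep^2)$, using $\int u\ph\,d\om=0$, where $A_i=\int u_i\ph^{1+1/k}d\om$ and $M_{ij}=\int u_iu_j\ph^{1+1/k}d\om$. The matrix $M$ is positive definite, so the rescaled moment map $\ep^{-1}\int u\ph_{\ep,b}d\om$ has an invertible $b$-derivative at $(\ep,b)=(0,-M^{-1}A)$, and the implicit function theorem produces $b_\ep$ for which $\ph_\ep:=\ph_{\ep,b_\ep}$ satisfies \eqref{integral}; by construction $\ph_\ep>0$ for small $\ep$ and $\ph_\ep\to\ph$ in $C^{\8}$.

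Applying \Cref{CM-flow} to each $\ph_\ep$ yields a smooth strictly convex body $K_\ep$ whose support function $s_\ep$ solves $p_k(\bar\n^2 s_\ep+s_\ep\bar g)=\ph_\ep$, and after translating the centroid of $K_\ep$ to the origin, the uniform two-sided bounds on $\ph_\ep$ give uniform inner and outer radius bounds on $K_\ep$ by standard convex body arguments, hence uniform $C^0$ control of $s_\ep$. A maximum principle applied to the concave operator $p_k^{1/k}$ delivers a uniform upper bound on $\bar\n^2 s_\ep$. Passing to a subsequence, $s_\ep\to s$ in $C^{1,\al}$, where $s$ is the support function of a convex body $K$ with nonempty interior satisfying $p_k(\bar\n^2 s+s\bar g)=\ph$ weakly. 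Since $K$ has interior, $\bar\n^2 s+s\bar g$ attains full rank at some point; combined with $\bar\n^2\ph^{-1/k}+\ph^{-1/k}\bar g\geq 0$, the Caffarelli--Guan--Ma constant rank theorem then forces $\bar\n^2 s+s\bar g>0$ everywhere. The equation becomes uniformly elliptic, and Evans--Krylov together with Schauder bootstrapping promote $s$ to $C^{\8}$.

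The main obstacle is the absence of a quantitative uniform positive lower bound on the principal radii of $K_\ep$ as $\ep\to 0$, which means the limit is a priori only weakly convex and the equation only degenerate-elliptic. Bridging from weak to strict convexity is the role of the constant rank theorem, the single analytic input from \cite{Guan2003a} that this argument retains; the gain relative to \cite{Sheng2004} is that the homotopy assumption \eqref{homotopy} is replaced by the elementary linear perturbation constructed above.
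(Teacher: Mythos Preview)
Your overall strategy coincides with the paper's: approximate $\varphi$ by strictly convex data, apply \Cref{CM-flow} to each approximant, pass to the limit, and invoke the constant rank theorem. Your perturbation $f_{\ep,b}=f+\ep(1+\langle b,u\rangle)$ together with the implicit function theorem is a legitimate and pleasant alternative to the paper's construction in \Cref{Approx}, where $\varphi_\tau^{-1/k}=(1-\tau)+\tau\varphi^{-1/k}-\langle u,z_\tau\rangle$ and $z_\tau$ is produced variationally via \cite[Lem.~3.1]{Ivaki2016b}; both yield a smooth family satisfying \eqref{integral} and the strict version of \eqref{strict-convex}.

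There is, however, a genuine gap in your limiting step. With only the $C^{2}$ \emph{upper} bound you obtain, you pass to a limit $s$ merely in $C^{1,\alpha}$ (at best $C^{1,1}$) and then appeal to the constant rank theorem. The constant rank theorem of \cite[Thm.~1.2]{Guan2003a} (and its Caffarelli--Guan--Ma/Bian--Guan descendants) is a statement about \emph{classical} solutions, requiring roughly $C^{3}$ regularity; it does not apply to a $C^{1,1}$ function satisfying $p_{k}(\bar\nabla^{2}s+s\bar g)=\varphi$ only in a weak or a.e.\ sense. Moreover, your assertion that ``since $K$ has interior, $\bar\nabla^{2}s+s\bar g$ attains full rank at some point'' is not justified at this regularity: the equation only forces rank $\geq k$ a.e., and nonempty interior does not upgrade this to full rank anywhere. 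Without full rank the equation is not uniformly elliptic, so Evans--Krylov does not kick in, and the bootstrapping you describe cannot start.

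The paper closes exactly this gap by invoking \cite[Thm.~3.3]{Guan2003a} to obtain \emph{uniform $C^{k}$} estimates on the approximating solutions $s_\tau$ before taking the limit; the limit $s$ is then smooth with $\bar\nabla^{2}s+s\bar g\geq 0$, the equation holds classically, and the constant rank theorem applies directly. To repair your argument you should likewise cite the Guan--Ma a priori estimates (your sketched $C^{0}$ and $C^{2}$ bounds are essentially those estimates anyway) rather than attempting to pass through a weak limit.
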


Before we proceed to the proofs of these results, we review the origin of the Christoffel--Minkowski problem.

\subsection*{Christoffel--Minkowski problem}
To briefly review the origin of the Christoffel--Minkowski problem in convex geometry, we will recall a few definitions. As always, an excellent reference for this material is \cite{Schneider2013a}. Let us denote the unit sphere and unit ball respectively by $\mathbb{S}^n$ and $B.$ By a convex body, we mean a compact convex set with non-empty interior. The set of convex bodies is denoted by $\mathcal{K}.$ Write $V(K)$ for the volume of $K\in\mathcal{K}$. The support function of $K$ is defined by
\eq{s_K(u)=\max_{x\in K} \langle x,u\rangle\quad \forall u\in \mathbb{S}^n.}
The sum of $K,L\in \mathcal{K}$ is defined as
\[K+L=\{a+b: a\in K,~b\in L\}.\]
For small $\varepsilon>0,$ due to the classical Steiner formula,
\[V(K+\varepsilon B)=\sum_{j=0}^{n+1}\binom{n+1}{j}W_j(K)\varepsilon^j,\]
where $\{W_j(K)\}_j$ are the quermassintegrals of $K$ or the $(n+1-j)$-th surface area. Note that
\eq{W_0(K)=V(K),}
and surface area of $K$ is
 \eq{(n+1)W_1(K)=S(K),} and also
 \eq{W_{n+1}(K)=W_{n+1}(B)}
 is a dimensional constant.

A remarkable theorem states that given any $K\in\mathcal{K}$, there exists a Borel measure $S_i(K)$ on the unit sphere such that for all $L\in \mathcal{K}$,
\eq{
    W_{i}(K,L)&:=\frac{1}{n+1-i}\lim_{\varepsilon\to 0}\frac{W_i(K+\varepsilon L)-W_i(K)}{\varepsilon}\\
    &=\frac{1}{n+1}\int_{\mathbb{S}^n}{s_L(u)}dS_{i}(K,u).}
Due to translation invariance of $W_{i}$,
\eq{
\int_{\mathbb{S}^n}udS_{i}(K,u)=0.
}
Moreover, by the mixed quermassintegrals inequality, we have
\eq{W_i(K,L)^{n+1-i}\geq W_i(K)^{n-i}W_i(L),}
and equality holds if and only if $K$ and $L$ are homothetic. In particular, this implies that
\eq{\label{variational structure}
\min_{L\in \mathcal{K}} \frac{W_i(K,L)}{(W_i(L))^{\frac{1}{n+1-i}}}
}
is attained only for homothetic transformations of $K$.

Suppose $0\leq i<n.$ The Christoffel--Minkowski problem aims at reconstructing the convex body from its $(n+1-i)$-th surface area measure:

Given a Borel measure $\mu$ on the unit sphere with
\eq{\label{1st necessary condition}\int_{\mathbb{S}^n} ud\mu(u)=0,}
find necessary and sufficient conditions on $\mu$ such that there exists a convex body $K$ with $S_i(K)=\mu$.

From \eqref{variational structure}, one would expect that solutions might be found by mini\-mizing
\eq{\mathcal{E}_i(L):=\frac{\int_{\mathbb{S}^n}s_Ld\mu}{(W_i(L))^{\frac{1}{n+1-i}}}}
in a suitable class of convex bodies. In particular, the search may be restricted to the set
\eq{\mathcal{K}'=\{L\in \mathcal{K}: W_i(L)=1,~\operatorname{St}(L)=0,~ \mathcal{E}_i(L)\leq  \mathcal{E}_i(B)\}.
}
where $\operatorname{St}(L)$ denotes the Steiner point of $L.$
In fact, this is the case for $i=0,$ corresponding to the prescribed surface area measure, a.k.a the Minkowski problem, provided
\eq{\label{2nd necessary condition}\mu(\{u:\langle u,v\rangle>0\})>0\quad\forall v\in\mathbb{S}^n.}
Moreover, (\ref{1st necessary condition}) and (\ref{2nd necessary condition}) together are both necessary and sufficient.

However, for $i>0$, this method has not been successful so far. 
For $i=n-1$, i.e., the Christoffel problem, Firey \cite{Firey1967b} and Berg used methods based on a Green function and
subharmonic functions to solve the problem completely; see also \cite{Firey1970} and \cite{Li2019f}.

If the boundary of $K,$ $\partial K,$ is $C^2$ smooth and strictly convex, then
\eq{dS_i(K)=p_{n-i}d\om,}
where
\eq{p_{\ell}:=\sum\limits_{1\leq i_1<\cdots<i_{\ell}\leq n}\lambda_{i_1}\cdots\lambda_{i_{\ell}},}
and  $\{\lambda_i\}$ are the principal radii of curvature (considered as functions on the unit sphere) of $\partial K$.
This brings us to the regular Christoffel--Minkowski problem which asks:

Given a smooth function $\varphi\cn\mathbb{S}^n\to (0,\infty),$ find necessary and sufficient conditions on $\varphi$ such that there exists a smooth, strictly convex hypersurface whose $p_k$ equals $\varphi.$

Guan and Ma in \cite{Guan2003a} using continuity methods and a sophisticated constant rank theorem found a sufficient condition on $\varphi\cn$
\eq{
\bar{\nabla}^2\varphi^{-\frac{1}{k}}+\bar{g}\varphi^{-\frac{1}{k}}\geq 0,\quad \int_{\mathbb{S}^n}u\varphi(u)d\omega=0.
}

Guan--Ma's sufficient condition is not necessary. In fact, \cite[Thm. 6.8]{Zhang1994} states the following. Let $C^{\alpha}_e(\mathbb{S}^n)$ denote the set of $\alpha$-H\"{o}lder continuous, antipodal symmetric functions on $\mathbb{S}^n$, and $\mathcal{F}^{2,\alpha}_e$ denotes the set of origin-symmetric convex bodies with strictly convex, $C^{2,\alpha}$-smooth boundaries. Suppose $\varphi$ is the $p_k$ of a convex body in $\mathcal{F}^{2,\alpha}_e$. Then there exists a $C_e^{\alpha}(\mathbb{S}^n)$ neighborhood $\mathcal{N}$ of $\varphi$ such that every function in $\mathcal{N}$ is the $p_k$ of a convex body in $\mathcal{F}^{2,\alpha}_e$. See also \cite[Cor. 6.9]{Zhang1994}.

\subsection*{Constrained curvature flows}
In order to get the estimates required in the proof of \Cref{CM-flow}, it is convenient to reformulate the problem in terms of the principal curvatures instead of the radii:
\eq{\label{gen-flow}\del_{t}x=(\mu(t)f(\nu)-F(\ka))\nu.}
Here the global term $\mu$ is defined in \Cref{CM-flow}, and
\eq{F=\br{\fr{\si_{n}}{\si_{n-k}}}^{\fr 1k},}
where $\sigma_{k}$ is the $k$-th elementary symmetric polynomial of the principal curvatures $\ka=(\ka_{i})$ and
\eq{f=\ph^{-\fr{1}{k}}.}

Due to their applications, flows of the form \eqref{gen-flow} have been receiving significant attention. In case $f=1$, they appeared in the form of volume-, surface area or quermassintegral preserving flows with global term, e.g.,
\cite{Andrews2001, Andrews2018, Bertini2018, Cabezas-Rivas2010a, Huisken1987a, Ivaki2013a, McCoy2003, McCoy2005, Sinestrari2015a}
and have been used to deduce geometric inequalities for hypersurfaces. In case $\mu=1$ , they have been used to prove problems of prescribed curvature, e.g., \cite{Chou2000, Gerhardt:02/2006}; see also \cite{BIS6} for a much broader overview over this topic.
Except for cases where the flow arises as a rescaling of a purely expanding or contracting flow, our flow \eqref{gen-flow} seems to be the first one with a mixed constraining term, where ``mixed" means that we have a non-local and a (possibly anisotropic) local term.

\subsection*{Outline}
The paper is organized as follows. In \Cref{prelim} we collect some basics, notation and evolution equations. \Cref{pinching} provides the crucial pinching estimate along \eqref{gen-flow} which holds in a much more general setting, as we will point out. In \Cref{monotone}, using Andrews' generalized H\"{o}lder inequality, we prove the monotonicity of certain functionals along the flow and prove uniform estimates for the support function of the adjusted flow hypersurfaces. In \Cref{Curv-est} we obtain the curvature estimates. The higher order estimates in \Cref{sec:LTE} are somewhat non-standard due to the global term. We take some care here and complete the proof of \Cref{CM-flow}. In \Cref{completion} we complete the argument on how to weaken the strict convexity assumption. This section also provides an alternative to the proof of \cite{Sheng2004} that removes \eqref{homotopy}.

\section{Basic conventions and evolution equations}\label{prelim}
We collect briefly our conventions on hypersurfaces of the (flat) Euclidean space $\bbR^{n+1}$. For an embedding
\eq{x\cn M\hra \bbR^{n+1}}
of a closed hypersurface with exterior normal vector field $\nu,$ the second fundamental form is defined so that it is positive definite on spheres:
\eq{D_Y X=\n_Y X-h(X,Y)\nu.}
Here $D$ denotes the connection on $\bbR^{n+1}$ and $\n$ the Levi-Civita connection on $M$ with respect to the induced metric
\eq{g=x^{\ast}\ip{\cdot}{\cdot}.}
In a local coordinate frame $(e_i)_{1\leq i\leq n}$ for $M$ and with the convention $\n_i=\n_{e_i},$
the Codazzi equation implies that
\eq{\n_i \n_j e_k-\n_j\n_i e_k=h_{jk}h^m_i \del_m x-h_{ik}h^m_j \del_m x,
}
where the index was lifted using $g$.
Hence the covariant Riemann tensor is given by
\eq{R_{ijkl}:={R_{ijk}}^m g_{lm}=(\n_i \n_j e_k-\n_j\n_i e_k)^m g_{lm}=h_{il}h_{jk}-h_{ik}h_{jl}.
}
For brevity, we introduce semi-colons to denote indices of covariant derivatives. For example, if $T$ is a tensor, the components of its second derivative are denoted by
\eq{T_{;ij}=\n_j\n_i T - \n_{\n_j\del_i}T.
}
The support function of a convex body $K$ is defined by
\eq{s_{K}(u)=\max_{x\in K}\ip{x}{u},\quad u\in \bbS^{n}.}
For a (strictly) convex body $K$, the bilinear form
\eq{r:=\bar\n^{2}s_{K}+s_{K}\bar g }
is (positive) non-negative definite. The eigenvalues of $r$ with respect to $\bar g$ are the principal radii of curvature.

We will use common facts about curvature functions, i.e., functions that can either be viewed as symmetric functions of the eigenvalues of the Weingarten map or as depending on the second fundamental form and the metric
\eq{F=F(\ka_{i})=F(A)=F(g,h).}
Then we write
\eq{F^{ij}=\fr{\del F}{\del h_{ij}},\quad F^{ij,kl}=\fr{\del^2 F}{\del h_{ij}\del h_{kl}}.}
We refer to \cite{Andrews:/2007} for a detailed account and common properties.

The short time existence for \eqref{gen-flow} follows from standard arguments, see \cite{Makowski:01/2013} for the detailed procedure.
We derive the evolution equations for the flow \eqref{gen-flow} on a time interval $[0,T)$, $T<\infty$. We use the linearized operator
\eq{\cL =\del_{t}-F^{kl}\nabla_k\nabla_l.}
\begin{lemma}
The following evolution equations along the flow \eqref{gen-flow}.
\eq{\del_t g_{ij}=2(\mu(t)f-F) h_{ij}.}
\eq{\label{Ev-h}\mathcal{L}h_i^j=&F^{kl}h_{km}h^{m}_{l}h^{j}_{i}-\mu f h^{j}_{m}h^{m}_{i}+F^{kl,rs}h_{kl;i}{h_{rs;}}^{j}\\
			&-\mu \bar\n^{2} f(x_{;k},x_{;l})h^{k}_{i}h^{lj}-\mu \bar \n f(x_{;k})h^{kj}_{i;}.}
\eq{\label{Ev-F}\cL F=&F^{ij}h_{ik}h^{k}_{j}F-\mu F^{ij}h_{ik}h^{k}_{j}f \\
&-\mu F^{ij}\bar\n^2 f(x_{;k},x_{;l})h^{k}_{i}h^{l}_{j}-\mu {F_{;}}^{k}\bar\n f(x_{;k}).}
For every fixed $x_{0}\in \bbR^{n+1}\cn$
\eq{\label{Ev-s}\cL\ip{x-x_{0}}{\nu}=&F^{ij}h_{ik}h^{k}_{j}\ip{x-x_{0}}{\nu}-2F\\
            &+\mu(f-\ip{x-x_{0}}{\n f}).}

\end{lemma}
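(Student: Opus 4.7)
The strategy is routine for flows of the form $\dot x = \Phi\nu$ with $\Phi := \mu(t)f(\nu) - F(\ka)$: differentiate the defining quantity, apply the Gauss--Weingarten relations $\partial_i\nu = h_i^k\partial_k x$ and $D_{\partial_i}\partial_j x = \n_i\partial_j x - h_{ij}\nu$, and use Simons' identity in tandem with the Gauss equation $R_{ijkl}=h_{ik}h_{jl}-h_{il}h_{jk}$ to interchange covariant derivatives on $h$.

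Differentiating $g_{ij} = \ip{\partial_i x}{\partial_j x}$ and using $\partial_i(\Phi\nu) = \Phi_{;i}\nu + \Phi h_i^k \partial_k x$ immediately yields $\del_t g_{ij} = 2\Phi h_{ij}$; the analogous computation from $|\nu|=1$ and $\ip{\nu}{\partial_i x}=0$ produces $\dot\nu = -\n\Phi$ viewed as a tangent vector. Differentiating $h_{ij}=-\ip{\partial_i\partial_j x}{\nu}$ and raising one index, while accounting for the simultaneous evolution of $g^{ij}$, gives the standard clean identity
\begin{equation*}
\del_t h_i^j = -{\Phi_{;i}}^{;j} - \Phi h_i^m h_m^j.
\end{equation*}

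To pass to \eqref{Ev-h}, I expand $\Phi=\mu f - F$ and handle each Hessian separately. For $F$ the chain rule gives $F_{;i}{}^{;j}=F^{kl}h_{kl;i}{}^{;j}+F^{kl,rs}h_{kl;i}{h_{rs;}}^{j}$. Simons' identity commutes $h_{kl;i}{}^{;j}$ past $h_i^j{}_{;kl}$ with a quartic commutator coming from the Gauss equation; combined with the $F h_i^m h_m^j$ of the evolution formula and the Euler relation $F^{kl}h_{kl}=F$, this correction collapses to $F^{kl}h_{km}h_l^m h_i^j$, and the term $F^{kl}h_i^j{}_{;kl}$ is absorbed into $\cL$. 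For $f$, since $f$ depends on $x$ only through $\nu$, the chain rule gives $f_{;i}=h_i^k\bar\n f(x_{;k})$, and a second covariant derivative -- using Gauss--Weingarten together with the fact that $\bar\n f$ is tangent to $\bbS^n$ at $\nu$ -- yields $f_{;ij}=h_{ij;}{}^k\bar\n f(x_{;k})+h_i^k h_j^m \bar\n^2 f(x_{;k},x_{;m})$. Raising one index and multiplying by $-\mu$ recovers the two spherical terms in \eqref{Ev-h}.

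The equation \eqref{Ev-F} follows by contracting \eqref{Ev-h} with $F^{ij}$: since $\del_t F=F^{ij}\del_t h_{ij}$ and the $F^{kl,rs}$-term is exactly what is needed to reassemble $F^{ij}\n_i\n_j F$ via the chain rule in reverse, the stated expression is obtained after one more use of homogeneity. Finally, for $u:=\ip{x-x_0}{\nu}$ one has $\dot u=\Phi-\ip{x-x_0}{\n\Phi}$ and $u_{;i}=h_i^k\ip{x-x_0}{x_{;k}}$; one further covariant derivative with the usual $\Gamma$-cancellation produces $u_{;ij}=h_{ij;}{}^k\ip{x-x_0}{x_{;k}}+h_{ij}-h_{ik}h_j^k u$. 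Tracing with $F^{ij}$ and using $F^{ij}h_{ij}=F$ together with $F^{ij}h_{ij;k}=F_{;k}$, the $\n F$-contributions coming from $\n\Phi$ and from $u_{;ij}$ cancel exactly, leaving \eqref{Ev-s}. The only real obstacle throughout is meticulous bookkeeping of signs and index positions -- in the Simons commutator, in the index-raising against the evolving metric, and in the Gauss-map derivatives of $f$ -- but no conceptual difficulty arises.
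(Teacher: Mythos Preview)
Your proposal is correct and follows essentially the same route as the paper: the general identity $\partial_t h_i^j=-\Phi_{;i}{}^{;j}-\Phi h_i^m h_m^j$, the Simons commutation combined with $F^{kl}h_{kl}=F$ to produce the $F^{kl}h_{km}h^m_l h^j_i$ term, the computation of $f_{;ij}$ via the Gauss map (equivalently, the zero-homogeneous extension of $f$), and the direct support-function calculation with the $\nabla F$ cancellation. The only quibble is the line $\partial_t F=F^{ij}\partial_t h_{ij}$: since the metric is evolving, the clean statement is $\partial_t F=F^i_{\ j}\partial_t h^j_{\ i}$ with $F$ viewed as a function of the Weingarten map, which is exactly what contracting \eqref{Ev-h} gives you.
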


\pf{
The evolution of the metric is standard. We start by calculating the evolution of the Weingarten operator. It follows from \cite[Lem.~2.3.3]{Gerhardt:/2006} that
\eq{\label{Ev-A-gen-b}\dot{h}^{j}_{i}&=-{(\mu f-F)_{;i}}^{j}-(\mu f-F)h_{ik}h^{kj}.}
First we have to replace the term ${F_{;i}}^{j}$. There holds
\eq{{F_{;i}}^{j}={F^{kl}h_{kl;i}}^{j}+F^{kl,rs}h_{kl;i}{h_{rs;}}^{j}.}
Now we use the Codazzi, Weingarten and Gauss equation to deduce
\eq{h_{kl;ij}=&h_{ki;lj}\\
		=&h_{ki;jl}+{R_{ljk}}^{m}h_{mi}+{R_{lji}}^{m}h_{mk}\\
		=&h_{ij;kl}+(h_{jk}h^{m}_{l}h_{mi}-h_{lk}h^{m}_{j}h_{mi})+(h_{ji}h^{m}_{l}h_{mk}-h_{li}h^{m}_{j}h_{mk}).}
We use
\eq{F^{k}_{l}h^{l}_{m}=h^{k}_{l}F^{l}_{m}}
to get
\eq{F^{kl}h_{kl;ij}=&F^{kl}h_{ij;kl}-F^{kl}h_{lk}h^{m}_{j}h_{mi}+F^{kl}h_{ij}h^{m}_{l}h_{mk}.}
From the $1$-homogeneity of $F$ it follows that
\eq{F^{kl}h_{kl;ij}=&F^{kl}h_{ij;kl}-Fh^{m}_{j}h_{mi}+F^{kl}h^{m}_{l}h_{mk}h_{ij}.}
Inserting this into \eqref{Ev-A-gen-b} we obtain
\eq{\label{Ev-h-1}
\cL h^{j}_{i}=&-F^{kl}h^{j}_{i;kl}+{F^{kl}h_{kl;i}}^{j}+F^{kl,rs}h_{kl;i}{h_{rs;}}^{j}\\
			&-{{\mu f_{;i}}^{j}}-(\mu f-F)h_{ik}h^{kj}\\
			=&F^{kl}h^{m}_{l}h_{mk}h^{j}_{i}-\mu fh_{i}^{m}h^{j}_{m}+F^{kl,rs}h_{kl;i}{h_{rs;}}^{j}-\mu{f_{;i}}^{j}.}
We calculate the term ${f_{;i}}^{j}$. We may assume that $f$ is extended as a zero homogeneous function to $\bbR^{n+1}$. Due to the Gaussian formula,
\eq{x_{;kj}=- h_{kj}\nu,}
we obtain
\eq{{f_{;i}}=f_{\nu}(\nu_{;i}),\q f_{;ij}=f_{\nu\nu}(\nu_{;i},\nu_{;j})+f_{\nu}(\nu_{;ij}).}
Moreover, by the Weingarten equation we have
\eq{\nu_{;i}=h^{k}_{i}x_{;k},\q \nu_{;ij}=h^{k}_{i;j}x_{;k}- h^{k}_{i}h_{kj}\nu.}
 The result follows from inserting the expression for $f_{;ij}$ into \eqref{Ev-h-1} and using the zero homogeneity in $\nu$.

 For $F$ we have
\eq{\del_{t}F=F^{i}_{j}\del_{t}h^{j}_{i}=F^{ij}F_{;ij}-\mu F^{ij}f_{;ij}+F^{ij}h_{ik}h^{k}_{j}(F-\mu f) }
and hence
\eq{\cL F=&F^{ij}h_{ik}h^{k}_{j}F-\mu F^{ij}h_{ik}h^{k}_{j}f \\
&-\mu F^{ij}\bar \n^2 f(x_{;k},x_{;l})h^{k}_{i}h^{l}_{j}-\mu {F_{;}}^{k}\bar\n f(x_{;k}).}
For the support function we calculate
\eq{\partial_{t}\ip{x-x_{0}}{\nu}=&\mu f-F-\ip{x-x_{0}}{\n(\mu f-F)},\\
\del_{i}\ip{x-x_{0}}{\nu}=&h^{k}_{i}\ip{x-x_{0}}{x_{;k}}}
and
\eq{\ip{x-x_{0}}{\nu}_{;ij}=h^{k}_{i;j}\ip{x-x_{0}}{x_{;k}}+h_{ij}-h_{ik}h^{k}_{j}\ip{x-x_{0}}{\nu}.}
Therefore,
\eq{\cL \ip{x-x_{0}}{\nu}=\mu f-2F-\mu\ip{x-x_{0}}{\n f}+F^{ij}h_{ik}h^{k}_{j}\ip{x-x_{0}}{\nu}.}

}

\section{Pinching estimate}\label{pinching}

\begin{prop}\label{pinching est}
There exist $\ep_0$, depending on $M_0$, and $f$, such that along the flow \eqref{gen-flow} with initial hypersurface $M_0$ we have
\eq{h_{ij}\geq \ep_0 H g_{ij}.}
\end{prop}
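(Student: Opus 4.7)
The plan is to apply Hamilton's tensor maximum principle, adapted to the operator $\cL=\del_t-F^{kl}\n_k\n_l$, to the symmetric $(1,1)$-tensor
\eq{T^j_i:=h^j_i-\ep_0 H\,\delta^j_i.}
I would choose $\ep_0>0$ so small that $T\geq 0$ on $M_0$ (possible since $M_0$ is smooth, compact and strictly convex, yielding a uniform lower bound for $\min_{i}\ka_i/H$ on $M_0$) and in addition $\ep_0\leq c_f/(nC_f)$, where $c_f>0$ is a positive lower bound and $C_f$ an upper bound for the eigenvalues of $\bar\n^{2}f+f\bar g$ on $\bbS^{n}$, available from the strict Christoffel--Minkowski condition \eqref{strict-convex}. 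This gives the asserted $\ep_0=\ep_0(M_0,f)$.

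The key organizational observation is that the two $\mu$-terms in \eqref{Ev-h} that are pointwise zero-order in $h$, namely $-\mu f\,h^{j}_{m}h^{m}_{i}$ and $-\mu\bar\n^{2}f(x_{;k},x_{;l})h^{k}_{i}h^{lj}$, combine into $-\mu(\bar\n^{2}f+f\bar g)(x_{;k},x_{;l})h^{k}_{i}h^{lj}$, whose quadratic form is controlled between $\mu c_f(h^2)^{j}_{i}$ and $\mu C_f(h^2)^{j}_{i}$ by \eqref{strict-convex}. At a putative first null point $(p_{0},t_{0})$ I would diagonalize $h$ in a local orthonormal frame with null eigenvector $e_{1}$ and $\ka_{1}=\ep_{0}H$. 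The zero-order part of $\cL T(e_{1},e_{1})$ then becomes
\eq{\br{\sum_{k}F^{kk}\ka_{k}^{2}}(\ka_{1}-\ep_{0}H)+\mu\br{\ep_{0}\sum_{k}A_{k}\ka_{k}^{2}-A_{1}\ka_{1}^{2}},}
with $A_{i}:=(\bar\n^{2}f+f\bar g)(x_{;i},x_{;i})\in[c_{f},C_{f}]$. The first summand vanishes at $\ka_1=\ep_0 H$, while the second is $\geq 0$ by $A_{1}\ka_{1}^{2}\leq C_{f}\ep_{0}^{2}H^{2}$ and $\ep_{0}\sum_{k}A_{k}\ka_{k}^{2}\geq \ep_{0}c_{f}|A|^{2}\geq \ep_{0}c_{f}H^{2}/n$ (Cauchy--Schwarz on $H$), together with the choice $\ep_{0}\leq c_f/(nC_f)$.

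For the remaining contributions to $\cL T(e_{1},e_{1})$ I would invoke: (i) the concavity of $F=(\si_{n}/\si_{n-k})^{1/k}$ on the positive cone to handle the Codazzi-type term $F^{kl,rs}h_{kl;i}h_{rs;}{}^{j}$, combined with the first-order null-point identities $h^{1}{}_{1;k}=\ep_{0}H_{;k}$ (the standard argument of Andrews for pinching preservation under concave curvature flows); (ii) the same first-order identities together with a Cauchy inequality to absorb the first-order term $-\mu\bar\n f(x_{;k})h^{k1}{}_{1;}$, possibly further tightening the smallness of $\ep_{0}$ by a factor depending on $\|\bar\n f\|_{\8}$. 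Together this yields $\cL T(e_{1},e_{1})\geq 0$ at the putative null point, contradicting the first-null-time assumption and proving $T\geq 0$ for all $t$.

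The main obstacle is the Codazzi bookkeeping: $F^{kl,rs}h_{kl;1}h_{rs;1}$ is non-positive by concavity, while its trace counterpart appearing in $-\ep_{0}\cL H$ is non-negative, so the combined sign has to be extracted by exploiting the first-order null-point constraints in a frame-invariant manner --- this is the standard but delicate concave-pinching argument --- and it has to coexist with the anisotropic first-order $\bar\n f$-term, which complicates matters slightly but is absorbed by the same Cauchy inequality.
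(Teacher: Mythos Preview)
Your approach is essentially the paper's: apply Andrews' tensor maximum principle to $S_{ij}=h_{ij}-\ep Hg_{ij}$, handle the Hessian-of-$F$ terms via \cite[Thm.~4.1]{Andrews:/2007}, and verify positivity of the zero-order terms at a null eigenvector using the strict convexity of $f$. Your zero-order computation is correct and in fact slightly more compact than the paper's, since grouping $-\mu f(h^{2})^{j}_{i}$ and $-\mu\bar\n^{2}f(x_{;k},x_{;l})h^{k}_{i}h^{lj}$ into $-\mu(\bar\n^{2}f+f\bar g)(x_{;k},x_{;l})h^{k}_{i}h^{lj}$ leads directly to the condition $\ep_{0}\leq c_{f}/(nC_{f})$.

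There is, however, a needless complication in your step (ii). The first-order term is, after one application of the Codazzi equation,
\eq{-\mu\,\bar\n f(x_{;k})\,h^{kj}_{i;}=-\mu\,\bar\n f(x_{;m})\,\nabla^{m}h^{j}_{i},}
and hence in the evolution of $T$ (or $S$) it appears as the pure drift term $-\mu\,\bar\n f(x_{;m})\nabla^{m}S_{ij}$. This is absorbed into the parabolic operator and handled automatically by the tensor maximum principle; no Cauchy inequality, no absorption, and no further shrinking of $\ep_{0}$ by $\|\bar\n f\|_{\infty}$ is required. Your proposed Cauchy absorption would in fact be problematic, since the only quadratic-in-$\nabla h$ term with a sign comes from the specific form $F^{kl,rs}h_{kl;1}h_{rs;1}$, which does not dominate a full $|\nabla h|^{2}$.

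A minor terminological point: ``concavity of $F$'' is not the operative hypothesis for the gradient/Hessian terms. What is used (both here and in the paper) is that $F=(\si_{n}/\si_{n-k})^{1/k}$ is inverse-concave, which places it in the class to which \cite[Thm.~4.1]{Andrews:/2007} applies; concavity alone would not suffice for that argument.
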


\pf{
Define
\eq{S_{ij}=h_{ij}-\ep Hg_{ij}.}
For the moment, we choose $\ep_0$ sufficiently small, such that $(S_{ij})$ is positive definite at $t=0$, for all $\varepsilon\leq \varepsilon_0.$
We calculate
 \eq{\cL S_{ij}=&\cL h_{ij}-\ep\cL H g_{ij}-\ep H\del_t{g}_{ij}\\
 			=&F^{kl}h_{km}h^{m}_{l}S_{ij}-(2F-\mu f)h_{mj}h^{m}_{i}+F^{pq,rs}h_{pq;i}{h_{rs;j}}\\
			&-\mu \bar\n^{2}f(x_{;m},x_{;l})h^{m}_{i}h^{l}_{j}-\mu \bar\n f(x_{;m}){S_{ij;}}^{m}+\ep\mu f\abs{A}^{2}g_{ij}\\
			&-\ep F^{kl,rs}h_{kl;m}{h_{rs;}}^{m}g_{ij}+\ep\mu \bar\n^{2}f(x_{;k},x_{;l})h^{k}_{m}h^{lm}g_{ij}\\
				&-2\ep H(\mu f-F)h_{ij}\\
			=& N_{ij}+\ti N_{ij},}
where we define

\eq{\ti N_{ij}=F^{pq,rs}h_{pq;i}{h_{rs;j}}-\ep F^{kl,rs}h_{kl;m}{h_{rs;}}^{m}g_{ij}. }
According to Andrews' tensor maximum principle \cite[Thm.~3.2]{Andrews:/2007}, in order to prove that $(S_{ij})$ remains positive definite if so initially, we have to show
\eq{\label{Pinching-1}N_{ij}v^{i}v^{j}+\ti N_{ij}v^{i}v^{j}+2\sup_{\Ga}F^{kl}(2\Ga^{p}_{k}S_{ip;l}v^{i}-\Ga^{p}_{k}\Ga^{q}_{l}S_{pq})\geq 0,}
whenever $(S_{ij})$ is non-negative definite and $S_{ij}v^{i}=0$.

The curvature function $F$ has all the properties which are needed to apply \cite[Thm.~4.1]{Andrews:/2007}. This proves that the sum of the second and third term of \eqref{Pinching-1} is non-negative. Hence we only have to deal with the zero order terms. Suppose $v$ has unit length. Using
\eq{h_{ij}v^{i}=\ep H v_{j},}
we calculate

\eq{N_{ij}v^{i}v^{j}=&-\ep^{2}H^{2}(2F-\mu f)+\ep\mu f\abs{A}^{2}-2\ep^{2}H^{2}(\mu f-F)\\
				&-\mu \ep^{2}H^{2} \bar\n^{2}f(x_{;m},x_{;l})v^{m}v^{l}+\mu\ep\bar\n^{2}f(x_{;k},x_{;l})h^{k}_{m}h^{ml}\\
				=&\ep \mu f(\abs{A}^{2}-\ep H^{2})-\mu \ep^{2}H^{2} \bar\n^{2} f(x_{;m},x_{;l})v^{m}v^{l}\\
				&+\mu\ep \bar\n^{2}f(x_{;k},x_{;l})h^{k}_{m}h^{ml}.}

Due to \eqref{strict-convex} and compactness, there exist constants $\al,\be>0$, such that for all $w\in T\bbS^{n}$,
\eq{\be\abs{w}^{2}\geq\bar \n^{2}f(w,w)\geq (\al-1)f\abs{w}^{2}.}
Hence, using $T_{x}M=T_{\nu(x)}\bbS^{n}$, we get
\eq{N_{ij}v^{i}v^{j}\geq &\ep \mu f(\abs{A}^{2}-\ep H^{2})-\mu \be \ep^{2}H^{2}+\ep\mu (\al-1)f\abs{A}^{2}\\
				\geq &\ep\mu f\br{\al\abs{A}^{2}-\ep H^{2}-\tfrac{\be\ep}{f}H^{2}}\\
				>&0,}
provided $\ep\leq \ep_{0}$, where $\ep_{0}$ is a constant solely depending on $f$ and $M_0$, chosen small enough to ensure $S_{ij}|_{M_0}>0$ and such that
\eq{\ep\br{1+\fr{\be}{f}}<\frac{\alpha}{n}.}
}

\begin{rem}
Note that in the proof of \Cref{pinching est} we have not used the special structure of either the global term or the curvature function $F$. Under the strict convexity assumption of $f$ the proposition is valid for every global term and every curvature function $F$ to which Andrews' maximum principle applies. The class of such $F$ is quite large; see \cite{Andrews:/2007}. This observation might be valuable in similar prescribed curvature problems.
\end{rem}

\begin{rem}
The pinching estimate, \Cref{pinching est}, ensures convexity is preserved since $S_{ij}$ remains positive definite if initially so. In fact, for our arguments, it is not a priori necessary to assume convexity is preserved. For if we let $T^{\ast} \leq T$ be the maximal time for which the flow is convex, then all our estimates are valid on $[0, T^{\ast})$. In particular, \Cref{lower bound on F} gives a positive lower bound on $F$ and the flow remains strictly convex on $[0, T^{\ast})$ yielding a contradiction if $T \ne T^{\ast}$.
\end{rem}

\section{Monotone quantity}\label{monotone}
In the following we suppress any subscripts of the support function and understand it to depend on time,
\eq{s=s_{K_{t}}.}
\begin{lemma}\label{Monotone Q}
Along the flow \eqref{CM-flow-2} we have
\eq{\label{k+1-volume}\frac{d}{dt}\int_{\mathbb{S}^n} sp_kd\omega=0}
and
\eq{\label{weighted L1 mean}\frac{d}{dt}\int_{\mathbb{S}^n}s\ph d\omega\leq 0.}
Moreover, equality holds if and only if $p_{k}\ph^{-1}$ is constant.
\end{lemma}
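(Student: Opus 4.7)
The plan is to exploit the fact that, for the flow hypersurfaces parametrized by the Gauss map, the support function evolves by
$$\dot s \;=\; \mu(t)\,f - F \;=\; \mu(t)\,\varphi^{-1/k} - p_k^{-1/k},$$
which is immediate from $s(u,t)=\langle x(u,t),u\rangle$ together with the observation that only the normal component of $\dot x$ contributes. Once this is in hand, both statements become integration identities/inequalities on $\bbS^n$.

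For \eqref{k+1-volume}, I would invoke the classical variational formula for the ``$(k+1)$-volume'' functional $\int s\,p_k\,d\omega$: two integrations by parts in $\int s\,p_k^{ij}\bar\nabla_i\bar\nabla_j\dot s\,d\omega$ (using that the Newton tensor $p_k^{ij}$ of $r=\bar\nabla^2 s+s\bar g$ is divergence-free on $\bbS^n$) combined with Euler's identity $p_k^{ij}r_{ij}=k\,p_k$ yield
$$\frac{d}{dt}\int_{\bbS^n} s\,p_k\,d\omega \;=\; (k+1)\int_{\bbS^n}\dot s\,p_k\,d\omega.$$
Substituting the expression for $\dot s$ and using the very definition of $\mu$, the right-hand side collapses to zero. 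This is the design principle behind the global term: $\mu(t)$ is precisely the normalization that makes the $(k+1)$-volume a conserved quantity along \eqref{CM-flow-2}.

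For \eqref{weighted L1 mean}, since $\varphi$ is time-independent,
$$\frac{d}{dt}\int_{\bbS^n} s\varphi\,d\omega \;=\; \int_{\bbS^n}\dot s\,\varphi\,d\omega \;=\; \mu\!\int_{\bbS^n}\varphi^{(k-1)/k}d\omega \;-\; \int_{\bbS^n} p_k^{-1/k}\varphi\,d\omega,$$
so after clearing the denominator hidden in $\mu$ the desired inequality becomes
$$\Bigl(\int p_k^{\frac{k-1}{k}}d\omega\Bigr)\Bigl(\int\varphi^{\frac{k-1}{k}}d\omega\Bigr) \;\leq\; \Bigl(\int p_k\varphi^{-\frac{1}{k}}d\omega\Bigr)\Bigl(\int p_k^{-\frac{1}{k}}\varphi\,d\omega\Bigr).$$
My proposal is to introduce the positive measure $d\eta := p_k^{(k-1)/k}d\omega$ and the positive function $w:=(\varphi/p_k)^{1/k}$; the four quantities above then become $\int d\eta$, $\int w^{k-1}d\eta$, $\int w^{-1}d\eta$, and $\int w^{k}d\eta$, respectively. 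Since $w^{k-1}=w^{-1}\!\cdot w^k$ and, for $k\geq 1$, $w\mapsto w^{-1}$ and $w\mapsto w^{k}$ are monotone in opposite directions, the inequality is exactly Chebyshev's integral inequality for an anti-monotone pair (one form of what the outline calls Andrews' generalized H\"older inequality). Equality holds iff $w$ is $\eta$-a.e.\ constant, i.e., iff $p_k\varphi^{-1}$ is constant on $\bbS^n$.

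The only genuinely non-trivial ingredient is the divergence-free identity $\bar\nabla_j p_k^{ij}=0$ required for the integration by parts in \eqref{k+1-volume}, which ultimately rests on the Ricci-type commutation of three covariant derivatives of $s$ on the round sphere; everything else is algebraic bookkeeping plus a one-variable rearrangement argument.
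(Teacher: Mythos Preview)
Your proof is correct and follows essentially the paper's route: the conservation law comes from the divergence-free Newton tensor together with the definition of $\mu$, and the monotonicity from the same Chebyshev/Andrews-type inequality. The only cosmetic difference is the choice of reference measure---the paper takes $d\theta=\varphi^{(k-1)/k}d\omega$, $G(x)=x^{-k}$, $\zeta=\varphi^{1/k}p_k^{-1/k}$ and cites Andrews' generalized H\"older inequality directly, whereas you rewrite the same inequality in Chebyshev form with $d\eta=p_k^{(k-1)/k}d\omega$; the equality case and all conclusions coincide.
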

\begin{proof}
Using
\eq{\dot{s}=\mu \ph^{-\fr 1k}-p_k^{-\frac{1}{k}}}
and that $\bar{\nabla}_ip_k^{ij}=0$ (cf., \cite[Lem.~2-12]{Andrews1994}), we find
\eq{\frac{d}{dt}\int_{\mathbb{S}^n} sp_kd\omega=0.}
Now we calculate
\eq{\label{derivative of functional}\frac{d}{dt}\int_{\mathbb{S}^n}s\ph d\omega
=&\fr{\int_{\bbS^{n}}p_{k}^{\fr{k-1}{k}}d\om}{\int_{\bbS^{n}}\ph^{-\fr 1k} p_{k}d\om}\int_{\bbS^{n}}\ph^{\fr{k-1}{k}}d\omega-\int_{\mathbb{S}^n}\ph p_{k}^{-\fr 1k}d\omega.}
Define
\eq{d\theta=\ph^{\fr{k-1}{k}}d\omega,\q G(x)=x^{-k},\q \zeta=\ph^{\fr 1k}p_{k}^{-\fr 1k}.}
Due to Andrews' generalized H\"{o}lder inequality \cite[Lem.~I3.3]{Andrews1998},
\eq{\fr{d}{dt}\int_{\bbS^{n}}s\ph d\om=\fr{\int_{\bbS^{n}}\zeta G(\zeta)d\theta}{\int_{\bbS^{n}}G(\zeta)d\theta}\int_{\bbS^{n}}d\theta-\int_{\bbS^{n}}\zeta d\theta\leq 0
}
and equality holds if and only if $\zeta$ is constant.
\end{proof}

We write $w_-$ and $w_+$ respectively for the minimum width and the maximum width of a closed, convex hypersurface (or a convex body) with support function $s$. They are defined as
\eq{w_+=\max_{u\in\bbS^n}(s(u)+s(-u)),\quad w_-=\min_{u\in\bbS^n}(s(u)+s(-u)).}

\begin{lemma}\label{lower-upper bnds on sp func}
Let $C(K_{t})$ denote the centroid of $K_{t}$. Then along the flow \eqref{CM-flow-2} the support functions $\ti s$ of
\eq{\ti M_t:=M_t-C(K_{t})} are uniformly bounded above and below away from zero. In particular, the in-radii of the $M_t$ are uniformly bounded below away from zero.
\end{lemma}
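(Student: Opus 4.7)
The plan is to combine the two integrated estimates from \Cref{Monotone Q} with the pinching from \Cref{pinching est}. First, I would derive the upper bound on $\ti s$ from \eqref{weighted L1 mean}: monotonicity gives $\int_{\bbS^n}s\varphi\,d\omega\leq\int_{\bbS^n}s_{K_0}\varphi\,d\omega$, and the positive lower bound on $\varphi$ promotes this to a uniform bound on $\int_{\bbS^n}s\,d\omega$. Since $\int_{\bbS^n}u\,d\omega=0$, the integral $\int_{\bbS^n}\ti s\,d\omega$ satisfies the same bound, i.e., the mean width of $\ti K_t$ is uniformly controlled. A convex body of diameter $D$ contains a segment of length $D$, whose mean width is a positive dimensional multiple of $D$, so by monotonicity of mean width under inclusion $\operatorname{diam}(\ti K_t)\leq D_0$. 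As the origin lies in the interior of $\ti K_t$ (being its centroid), this already gives $\ti s\leq D_0$ uniformly.

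For the lower bound, I would use \eqref{k+1-volume} together with the translation invariance of quermassintegrals to record that $W_{n-k}(\ti K_t)=W_{n-k}(K_t)=W_{n-k}(K_0)>0$ is preserved in time. The pinching $h_{ij}\geq\varepsilon_0Hg_{ij}$ from \Cref{pinching est} is equivalent to the pointwise estimate $\lambda_{\max}\leq\varepsilon_0^{-1}\lambda_{\min}$ for the principal radii $\lambda_i=\kappa_i^{-1}$. The central geometric assertion is that such comparability of the principal radii forces $\ti K_t$ to be comparable to a round ball, in the sense that its circumradius is bounded by a constant $C(\varepsilon_0,n)$ times its in-radius. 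Combined with the preserved value of $W_{n-k}$, which satisfies $W_{n-k}(B_R)=cR^{k+1}$ for a ball and so fixes the length scale from below, this yields a uniform positive lower bound on the in-radius of $\ti K_t$. A Gr\"unbaum-type centroid inclusion (for instance, $\ti K_t\supset-(n+1)^{-1}\ti K_t$ valid for bodies with centroid at the origin) finally guarantees that the origin itself sits at distance comparable to the in-radius from $\partial\ti K_t$, so $\min\ti s\geq c_0>0$; the statement about in-radii then follows from translation invariance.

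The hard part will be rigorously turning the pointwise pinching of the principal radii into the geometric statement that $\ti K_t$ is comparable to a round ball. I would handle this by viewing the support function as solving the elliptic system $\bar\n^{2}s+s\bar g\in[\lambda_{\min},\lambda_{\max}]\bar g$ with uniformly bounded eigenvalue ratio: after factoring out the kernel $\{\langle v,u\rangle:v\in\bbR^{n+1}\}$ of $\bar\Delta+n$, a Harnack/oscillation estimate on the sphere produces a scale-invariant bound $\max\ti s\leq C(\varepsilon_0,n)\min\ti s$, and the preserved quermassintegral then pins down the scale from below. Everything else is standard convex geometry bookkeeping.
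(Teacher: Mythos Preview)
Your overall architecture matches the paper's almost step for step: the monotone quantity \eqref{weighted L1 mean} together with the integral condition on $\varphi$ gives the upper bound on $\ti s$; the conserved quantity \eqref{k+1-volume} fixes a lower length scale via $\int sp_k\,d\om\leq |\bbS^n|(\max\ti s)^{k+1}$; the pinching from \Cref{pinching est} upgrades this to a two-sided width bound; and a centroid inclusion converts the minimum-width bound into a lower bound on $\ti s$ and the in-radius. Your upper-bound route (mean width $\Rightarrow$ diameter) is a legitimate alternative to the paper's direct hemisphere estimate $\ti s(x)\geq\ti s(e_{n+1})x_{n+1}$.

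The one place you diverge substantively is precisely the step you flag as hard. The paper does not attempt any Harnack or oscillation estimate for $\bar\n^{2}s+s\bar g$; it simply invokes the known fact (Andrews \cite[Thm.~5.1]{Andrews1994a}, also \cite[Lem.~8.5.8]{Schneider2013a}) that a pointwise bound on the ratio of principal radii forces a bound on $w_{+}/w_{-}$. Your proposed Harnack argument is shakier than you suggest: the constraint $\bar\n^{2}s+s\bar g\in[\la_{\min},\la_{\max}]\bar g$ has only the \emph{ratio} of the (pointwise varying) bounds under control, which is not the data a standard Harnack inequality consumes, and subtracting the centroid (rather than the Steiner point) does not project out the kernel $\{\ip{v}{u}\}$ of $\bar\De+n$, so ``factoring out the kernel'' is not automatic. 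Since the geometric statement you need is already in the literature, citing it is both simpler and safer; the paper then finishes with the explicit inclusion $\tfrac{1}{n+2}w_{-}(K)B\subset K-C(K)$ from \cite[p.~320]{Schneider2013a} in place of your Gr\"unbaum-type reflection.
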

\pf{Note that
\eq{\ti s(u)=s(u)-\langle u, C(K_{t})\rangle\q \forall u\in \bbS^n.}
Therefore, due to \eqref{weighted L1 mean} and \eqref{integral}, we have
\eq{\int_{\bbS^n}\ti s\ph d\omega&=\int_{\bbS^n}s\ph d\omega-\left\langle C(K_{t}),\int_{\bbS^n}u\ph(u)d\omega\right\rangle\\
&=\int_{\bbS^n}s\ph d\omega\leq\int_{\bbS^n}s_{K_{0}}\ph d\omega.}
Suppose $\max \ti s$ is attained at the north pole $e_{n+1}$. By convexity
\eq{\ti s(x)\geq \ti s(e_{n+1})x_{n+1}\q\mbox{on}~ \{x_{n+1}>0\}\cap \mathbb{S}^n.} Therefore,
\eq{
\ti s(e_{n+1})\fr{\min\ph}{2}\int_{\{x_{n+1}>\frac{1}{2}\}\cap \mathbb{S}^n}d\omega\leq \int_{\bbS^n}s_{K_{0}}\ph d\omega.
}
Thus $0<\ti s\leq a$ for some constant $a$. By \eqref{k+1-volume} and that the mixed volume is monotonic in each argument (cf., \cite[(5.25)]{Schneider2013a}), we obtain
\eq{
b^{k+1}:=\frac{1}{|\bbS^n|}\int_{\bbS^n}sp_kd\omega|_{\ti M_0}=\frac{1}{|\bbS^n|}\int_{\bbS^n} sp_kd\omega|_{\ti M_t}\leq \max \ti{s}^{k+1}.
}
By \Cref{pinching est} and \cite[Thm.~5.1]{Andrews1994a} or \cite[Lem.~8.5.8]{Schneider2013a}, the ratio between the maximum width $\ti w_+$ and the minimum width $\ti w_-$ of $\ti M_t$ is uniformly bounded above. Since
\eq{\ti w_+\geq \max \ti s\geq b,}
 we obtain
\eq{\ti w_-\geq c}
for some constant $c.$ Moreover, for any convex body $K$,
\eq{
\tfrac{1}{n+2}w_-(K)B\subset K-C(K)\subset \tfrac{n+1}{n+2}w_+(K)B,
}
see \cite[p.~320]{Schneider2013a}).
Thus the support function of $\ti M_t$ and hence its in-radius is bounded below away from zero independent of time.
}
\begin{lemma}\label{global term-control}
$\mu$ is uniformly bounded above, and below away from zero.
\end{lemma}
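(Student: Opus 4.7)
The plan is to bound the numerator $N(t):=\int_{\bbS^n} p_k^{(k-1)/k}d\om$ and the denominator $D(t):=\int_{\bbS^n}\ph^{-1/k}p_k\, d\om$ of $\mu$ separately, each above and below by positive constants. The inputs are the conservation law \eqref{k+1-volume}, the bounds $0<c_1\le \ti s\le c_2$ on the adjusted support function provided by \Cref{lower-upper bnds on sp func}, the pinching estimate \Cref{pinching est}, and Jensen's inequality.

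For $D(t)$, since $\ph\in C^{\8}(\bbS^n)$ is fixed and positive, $\ph^{-1/k}$ is pinched between two positive constants, so it suffices to control $\int p_k\, d\om$. The surface area measure $p_k\, d\om=dS_{n-k}(K_t)$ has vanishing first moment (translation invariance, cf.\ \eqref{1st necessary condition}), hence $\int s\, p_k\, d\om=\int\ti s\, p_k\, d\om$; combining this identity with conservation \eqref{k+1-volume} and the bounds on $\ti s$ yields $\mathrm{const}/c_2\le \int p_k\, d\om\le \mathrm{const}/c_1$, and therefore the required two-sided control on $D(t)$.

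For the upper bound on $N(t)$, concavity of $x\mapsto x^{(k-1)/k}$ together with Jensen's inequality gives $N(t)\le |\bbS^n|^{1/k}\br{\int p_k\, d\om}^{(k-1)/k}$, controlled by the previous step. The lower bound on $N(t)$ is the only point where pinching enters essentially: \Cref{pinching est} implies that all principal radii are pointwise uniformly comparable, hence $p_k\ge c(\ep_0,n,k)\, p_1^k$ on $\bbS^n$, so $p_k^{(k-1)/k}\ge c'\, p_1^{k-1}$. For $k=1$ the integrand is identically $1$ and the bound is trivial; for $k\ge 2$, Jensen applied to the convex function $x^{k-1}$ yields $\int p_1^{k-1}d\om\ge |\bbS^n|^{2-k}\br{\int p_1\, d\om}^{k-1}$, and since $p_1=\bar\De s+ns$ integrates to $n\int s\, d\om=n\int\ti s\, d\om\ge nc_1|\bbS^n|$, we obtain a positive lower bound on $N(t)$.

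I do not expect any serious obstacle: the argument is an assembly of the a priori information already produced in \Cref{pinching}--\Cref{monotone}. The mildly subtle ingredient is the identity $\int s\, p_k\, d\om=\int\ti s\, p_k\, d\om$, which is what allows the $\ti s$-bounds of \Cref{lower-upper bnds on sp func} to be transferred into bounds on $\int p_k\, d\om$ via the conserved quantity of \Cref{Monotone Q}.
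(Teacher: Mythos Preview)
Your proof is correct and follows the same overall scheme as the paper: bound the numerator and denominator of $\mu$ separately via the pinching estimate, the support function bounds, and H\"older/Jensen. The technical details differ slightly, though.

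For the two-sided control of $\int_{\bbS^n}p_k\,d\om$ the paper invokes monotonicity of mixed volumes \cite[(5.25)]{Schneider2013a}, which immediately gives $\min\ti s^{\,k}\le \tfrac{1}{|\bbS^n|}\int p_k\,d\om\le \max\ti s^{\,k}$. Your route via the conserved quantity \eqref{k+1-volume} together with the identity $\int s\,p_k\,d\om=\int\ti s\,p_k\,d\om$ (from the vanishing first moment of $p_k\,d\om$) is equally valid and more self-contained, since it avoids the external reference. For the lower bound on $N(t)$ the paper compares $p_k^{(k-1)/k}$ with $p_{k-1}$ under pinching and then bounds $\int p_{k-1}\,d\om$ again by mixed volume monotonicity, whereas you compare with $p_1^{k-1}$ and use the elementary identity $\int p_1\,d\om=n\int s\,d\om=n\int\ti s\,d\om$. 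Both arguments hinge on pinching in the same way; yours trades the convex-geometric input for an extra application of Jensen, which is a fair exchange.
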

\pf{Due to \Cref{pinching est}, for a constant depending on $\varepsilon$, we have
\eq{
c_{\varepsilon}\int_{\bbS^n} p_{k-1}d\omega\leq \int_{\bbS^n}p_k^{\frac{k-1}{k}}d\omega\leq \br{\int_{\bbS^n} p_kd\omega}^{\frac{k-1}{k}}\br{\int_{\bbS^n}d\omega}^{\frac{1}{k}}.
}
Moreover, by \cite[(5.25)]{Schneider2013a} we have
\eq{
\min \ti s^k\leq& \frac{1}{|\bbS^n|}\int_{\bbS^n}p_{k}d\omega\leq \max \ti s^k,\\
\min \ti s^{k-1}\leq&\frac{1}{|\bbS^n|}\int_{\bbS^n}p_{k-1}d\omega.
}
Now the claim follows from \Cref{lower-upper bnds on sp func}.
}

\section{Curvature estimates}\label{Curv-est}
We show that $F$ is uniformly bounded above. In order to eliminate the bad first order term in the evolution of $F$, we will use the first order term in the evolution of the support function. To do this, we need to show that the flow hypersurfaces enclose a ball of small radius for definite amount of time.
We adapt \cite[Lem.~6.1]{McCoy2005} to our needs.

\begin{lemma}\label{starshaped}
If $B_{4\de}(x_{0})\sub K_{t_{0}}$ for some $t_{0}\in [0,T)$, then there exists $c_{\de,\ep}>0$ such that $B_{2\de}(x_{0})\sub K_{t}$ for all $t\in [t_{0},\min(t_{0}+c_{\de,\ep},T))$.
\end{lemma}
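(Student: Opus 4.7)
I will track the spatial minimum of the support function based at $x_0$,
\[
\sigma(\cdot,t):=\langle x-x_0,\nu\rangle\q\text{on}~M_t,\qquad \rho(t):=\min_{M_t}\sigma(\cdot,t).
\]
Since the $K_t$ remain strictly convex (\Cref{pinching est}), $\rho(t)=\dist(x_0,\del K_t)$ whenever $x_0\in K_t$, and $B_r(x_0)\sub K_t$ is equivalent to $\rho(t)\geq r$. So the hypothesis reads $\rho(t_0)\geq 4\de$ and the goal is $\rho(t)\geq 2\de$ on $[t_0,t_0+c_{\de,\ep})$.

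The engine is the evolution equation \eqref{Ev-s}. At any spatial minimizer $p_*$ of $\sigma(\cdot,t)$, $\n^{2}\sigma(p_*)\geq 0$, hence $F^{kl}\n_{k}\n_{l}\sigma(p_*)\geq 0$, so
\[
\del_t\sigma(p_*,t)\geq \cL\sigma(p_*,t)=F^{ij}h_{ik}h^{k}_{j}\,\rho(t)-2F+\mu\bigl(f-\langle x-x_0,\n f\rangle\bigr).
\]
By \Cref{lower-upper bnds on sp func} (plus the translation invariance of the diameter) and \Cref{global term-control}, the last parenthesis is bounded uniformly by some $C_1=C_1(M_0,f,\ep)$. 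Pinching (\Cref{pinching est}) renders the principal curvatures $\ka_i$ pairwise comparable, so $F$ is comparable to $\ka_{\min}$; in particular
\[
F^{ij}h_{ik}h^{k}_{j}=\sum_i F_i\ka_i^{2}\geq F\ka_{\min}\geq c(\ep)F^{2}
\]
for some $c(\ep)>0$. Minimizing the quadratic $c\rho F^{2}-2F$ over $F\geq 0$ yields
\[
\del_t\sigma(p_*,t)\geq -\fr{1}{c(\ep)\rho(t)}-C_1.
\]

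Hamilton's trick for Lipschitz minima transfers this bound to $\rho'(t)$. As long as $\rho(t)\geq 2\de$, we obtain $\rho'(t)\geq -A(\de,\ep)$ with $A:=\tfrac{1}{2c(\ep)\de}+C_1$; integration gives $\rho(t)\geq 4\de-A(t-t_0)$, which stays $\geq 2\de$ whenever $t-t_0\leq 2\de/A(\de,\ep)$. Setting $c_{\de,\ep}:=2\de/A(\de,\ep)$ completes the proof.

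\textbf{Main obstacle.} The one delicate step is the quadratic-in-$F$ lower bound for $\cL\sigma$, which rests on the inequality $F^{ij}h_{ik}h^{k}_{j}\geq c(\ep)F^{2}$. Pinching enters essentially here: without comparability of the $\ka_i$, the good second-order term in \eqref{Ev-s} could not absorb the bad $-2F$ drift, and the spatial minimum of $\sigma$ would not admit a clean lower bound independent of the a priori uncontrolled speed.
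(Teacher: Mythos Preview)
Your argument is correct and reaches the conclusion by a genuinely different route than the paper. The paper never works with $\sigma=\ip{x-x_0}{\nu}$ directly; instead it applies the maximum principle to $\psi=\abs{x-x_0}^2-r(t)^2$, where $r$ solves $\dot r=-\La_\ep/r$ with $r(t_0)=4\de$ and $\La_\ep:=\sup F^{ij}g_{ij}$, which is finite by pinching since $F^{ij}g_{ij}$ is $0$-homogeneous in $\ka$. A short computation gives $\cL\psi\geq 2\mu f\ip{x-x_0}{\nu}\geq 0$ while $x_0\in K_t$, so $\min\psi$ is nondecreasing and one reads off $c_{\de,\ep}=6\de^2/\La_\ep$. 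Thus the paper's mechanism is the bounded trace $F^{ij}g_{ij}$, whereas yours is the quadratic lower bound $F^{ij}h_{ik}h^k_j\geq c(\ep)F^2$ followed by completing the square; both lean on pinching, but in different guises. The paper's version is a bit more self-contained in that it needs neither \Cref{global term-control} nor any diameter bound.

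One step in your write-up is misjustified, though the conclusion survives. The gradient in \eqref{Ev-s} is the \emph{hypersurface} gradient of $f\circ\nu$, so $f_{;i}=\bar\n_k f\,h^k_i$ carries a factor of the Weingarten map; hence $\abs{\n f}$ is not controlled by \Cref{lower-upper bnds on sp func} and \Cref{global term-control} alone, as no curvature bound is yet available. What actually saves you is that at the spatial minimizer $p_*$ of $\sigma$ one has $0=\sigma_{;i}=h^k_i\ip{x-x_0}{x_{;k}}$, and by strict convexity this forces $x-x_0$ to be normal at $p_*$; since $\n f$ is tangential, $\ip{x-x_0}{\n f}(p_*)=0$ and the bracketed term reduces to $\mu f>0$. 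With this observation (indeed one may take $C_1=0$), your argument goes through verbatim.
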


\pf{Due to \Cref{pinching est}, there exists $\La_{\varepsilon}$ such that
\eq{\La_{\ep}:=\max_{(0,T)}F^{ij}g_{ij}.}
The solution $r=r(t)$ of
\eq{
\left\{
  \begin{array}{ll}
    \dot{r}&=-\fr{\La_{\ep}}{r}, \\
    r(t_{0})&=4\de
  \end{array}
\right.
}
is given by
\eq{r(t)=\rt{16\de^{2}-2\La_{\ep}(t-t_{0})}}
and we have
\eq{r(t_{0}+\tfrac{6\de^{2}}{\La_{\ep}})=2\de.}
Define
\eq{\psi=\abs{x-x_{0}}^{2}-r^{2}.}
Therefore, we have
\eq{\dot\psi=&2(\mu f-F)\ip{x-x_{0}}{\nu}+2\La_{\ep}\\
\abs{x-x_{0}}^{2}_{;ij}=&2g_{ij}-2h_{ij}\ip{x-x_{0}}{\nu}.}
This implies that
\eq{\cL \psi=2\mu f\ip{x-x_{0}}{\nu}+2\La_{\ep}-2F^{ij}g_{ij}\geq 2\mu f\ip{x-x_{0}}{\nu}.}
The latter term is non-negative, as long as $x_{0}\in K_{t}$. From a continuity argument we conclude that $\min \psi$ is non-decreasing up to $t_{0}+\tfrac{6\de^{2}}{\La_{\ep}}$.
}

\begin{lemma}\label{upper bound on F}
The function $F$ and the Weingarten map $A$ are uniformly bounded along the flow \eqref{gen-flow}.
\end{lemma}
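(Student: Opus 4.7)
The plan is to adapt the classical Tso--McCoy auxiliary function method to our mixed-constraint flow \eqref{gen-flow}. The key inputs are the pinching estimate (\Cref{pinching est}), the in-radius and diameter bounds (\Cref{lower-upper bnds on sp func}), the two-sided control of $\mu$ (\Cref{global term-control}), and the short-time persistence of interior balls (\Cref{starshaped}).

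First I will reduce matters to uniform estimates on short time intervals of fixed length. By \Cref{lower-upper bnds on sp func}, there is $\de>0$ (depending only on the initial data and $\ph$) such that at every $t_0\in[0,T)$ one can choose $x_0\in\bbR^{n+1}$ with $B_{4\de}(x_0)\sub K_{t_0}$. Moreover, $\La_\ep$ from \Cref{starshaped} is bounded independently of $F$ since $\sum F^i$ is zero-homogeneous and the pinched curvature cone is compact, so the resulting $\tau>0$ is uniform. \Cref{starshaped} then gives $B_{2\de}(x_0)\sub K_t$ for $t\in[t_0,t_0+\tau]$, hence $u:=\ip{x-x_0}{\nu}\geq 2\de$ on this interval, and $u\leq C$ by the diameter control. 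On this interval I consider the Tso-type quantity
\eq{W=\fr{F}{u-\de},}
which is equivalent to $F$ up to bounded factors.

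At an interior spatiotemporal maximum of $W$, the critical condition $\n F=W\,\n u$ makes all gradient terms in $\cL W$ cancel, leaving $(u-\de)^2\cL W=(u-\de)\cL F-F\,\cL u$. Substituting \eqref{Ev-F} and \eqref{Ev-s}, the leading-order terms $F^{ij}h_{ik}h^k_j\cdot F$ from $\cL F$ and $F^{ij}h_{ik}h^k_j\cdot u$ from $\cL u$ combine into $-\de F^{ij}h_{ik}h^k_jF$, while all remaining terms are bounded by $CF^2$: the smooth functions $\mu,f,\bar\n f,\bar\n^2 f$ are uniformly bounded, $|x-x_0|$ is controlled by the diameter, $F^{ij}h_{ik}h^k_j=\sum F^i\ka_i^2\leq \ka_{\max}F\leq CF^2$, and the gradient contribution $-\mu {F_{;}}^k\bar\n f(x_{;k})$ rewrites at the maximum as $-\mu W\,u_{;k}\bar\n f(x_{;k})$ with $|u_{;k}|\leq|A|\cdot\mathrm{diam}\leq CF$. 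By \Cref{pinching est}, all $\ka_i$ are comparable so that $\ka_{\min}\geq c_1F$, giving $F^{ij}h_{ik}h^k_j\geq c_1F^2$. Hence
\eq{0\leq(u-\de)^2\cL W\leq-\de c_1F^3+CF^2}
at the maximum, forcing $F\leq C_\ast$ for a constant $C_\ast$ independent of $t_0$.

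Consequently $\sup_{[t_0,t_0+\tau]}F$ is controlled in terms of $\sup F(t_0,\cdot)$ and $C_\ast$, and iterating over the finitely many subintervals of length $\tau$ covering $[0,T)$ yields an upper bound on $F$; the pinching estimate then bounds all principal curvatures and hence the Weingarten map. The main obstacle I expect will be the careful bookkeeping of the two new terms in $\cL F$ generated by the anisotropy, $-\mu F^{ij}\bar\n^2 f(x_{;k},x_{;l})h^k_ih^l_j$ and $-\mu {F_{;}}^k\bar\n f(x_{;k})$, which have no analogue in the classical isotropic setting but must be absorbed into the $CF^2$ remainder using smoothness of $f=\ph^{-1/k}$ and the diameter bound.
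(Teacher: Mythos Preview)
Your proposal is correct and follows essentially the same Tso--McCoy auxiliary function strategy as the paper: both define $F/(\ip{x-x_0}{\nu}-\de)$ on fixed-length intervals provided by \Cref{starshaped}, use the pinching to get $F^{ij}h_{ik}h^k_j\geq cF^2$, and absorb all anisotropic remainder terms into $CF^2$ to force $F\leq C_\ast$ at a maximum, then iterate. The only noteworthy difference is that the paper invokes the convexity assumption \eqref{strict-convex} to absorb the $-\mu F^{ij}\bar\n^2 f(x_{;k},x_{;l})h^k_ih^l_j$ term directly into the good $-\mu f F^{ij}h_{ik}h^k_j$ term (since $\bar\n^2 f\geq -f\bar g$), whereas you simply bound $\abs{\bar\n^2 f}$ and treat it as part of the $CF^2$ remainder; both routes work. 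One minor wording issue: your ``finitely many subintervals'' is imprecise if $T=\infty$, but since the bound on each subinterval is $\max(\sup F(t_0,\cdot),C_\ast)$ and hence non-increasing under iteration, the bootstrapping yields a uniform bound regardless.
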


\pf{Let $t_{0}\in [0,T)$ and define
\eq{Z=\frac{F}{\ip{x-x_{0}}{\nu}-\de},}
where $\de>0$ and $x_{0}\in K_{t_{0}}$ are chosen such that
\eq{\ip{x-x_{0}}{\nu}\geq 2\de\q \fa t\in [t_{0},\min(t_{0}+c_{\de,\ep},T));}
see \Cref{lower-upper bnds on sp func} and \Cref{starshaped}. We may assume that $x_{0}=0$. By \eqref{Ev-F}, \eqref{Ev-s} and \eqref{strict-convex}, at a maximum point of $Z$,
\eq{\cL Z\leq& -\fr{\de}{\ip{x}{\nu}-\de} F^{ij}h_{ik}h_{j}^{k}Z-\fr{\mu}{\ip{x}{\nu}-\de}{F_{;}}^{k}\bar\n f(x_{;k})\\
			&+2Z^{2}+\mu\fr{\ip{x}{x_{;i}}}{(\ip{x}{\nu}-\de)^{2}}\bar\n f(x_{;k})h^{ki}F.}
Recall that $\ka_{n}\leq c_{\ep}\ka_1$. This in particular yields
\eq{F^{ij}h_{ik}h^{k}_{j}\geq c_{\ep}F^{2}.}
In addition, due to
\eq{0=Z_{;k}=\fr{F_{;k}}{\ip{x}{\nu}-\de}-\fr{F}{(\ip{x}{\nu}-\de)^{2}}h^{i}_{k}\ip{x}{x_{;i}},}
 we have
\eq{\abs{\n F}\leq c\abs{x}Z^{2}.}
Putting all these together and using \Cref{global term-control} we arrive at
\eq{\label{LZ}\cL Z\leq & -c Z^{3}+2Z^{2}+c\abs{x}Z^{2}.}
Since the origin is in the interior of convex body $K_t,$
\eq{|x|\leq \max |x|=\max s\leq w_+(M_t).}
Thus \Cref{lower-upper bnds on sp func} implies that
\eq{\max_{t\in [t_{0},\min(t_{0}+c_{\de,\ep},T))}Z(t,\cdot)\leq c(M_{0},\max Z(t_{0})).}
A bootstrapping argument starting at $t=0$ shows $F$ is uniformly bounded above. Then $\ka_{n}\leq c_{\ep}F$ yields the bound on $A$.
}

\begin{lemma}
\label{lower bound on F}
There exists a constant $c>0$, which depends on the data of the flow and on $T$, such that along \eqref{gen-flow} we have
\eq{F\geq c.}
In particular, in finite time the principal curvatures range in a compact subset of $\Ga_{+}$.
\end{lemma}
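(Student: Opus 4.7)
The plan is to bound $W:=F^{-1}$ from above on $[0,T)$ by the parabolic maximum principle. An upper bound on $W$ is equivalent to a positive lower bound on $F$, and in conjunction with the pinching estimate \Cref{pinching est} and the upper curvature bound \Cref{upper bound on F}, it forces the principal curvatures to stay in a compact subset of $\Ga_+$.

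First I would derive the evolution of $W$. Using $\del_t W=-F^{-2}\del_t F$ and $\n_i\n_j W=-F^{-2}\n_i\n_j F+2F^{-3}\n_i F\,\n_j F$, one finds $\cL W=-F^{-2}\cL F-2F^{-3}F^{ij}\n_i F\,\n_j F$, which together with \eqref{Ev-F} gives
\[
\cL W = F^{-2}F^{ij}h_{ik}h^k_j(\mu f-F) + \mu F^{-2}F^{ij}\bar\n^2 f(x_{;k},x_{;l})h^k_i h^l_j + \mu F^{-2}{F_{;}}^{k}\bar\n f(x_{;k}) - 2F^{-3}F^{ij}\n_i F\,\n_j F.
\]
At a spatial maximum of $W$, the condition $\n W=0$ forces $\n F=0$, which eliminates the last two terms. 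Since $F^{ij}$ is positive definite, $F^{ij}\n_i\n_j W\leq 0$ there, and $\cL W\geq \del_t W$ at the maximum, yielding
\[
\del_t W_{\max}\leq F^{-2}F^{ij}h_{ik}h^k_j(\mu f-F) + \mu F^{-2}F^{ij}\bar\n^2 f(x_{;k},x_{;l})h^k_i h^l_j.
\]

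The crux is to show that the right hand side is bounded by a constant independent of $W$. By \Cref{pinching est}, the principal curvatures are mutually comparable, so $F\sim \ka_i$ for all $i$ and in particular $|h|^2\leq cF^2$. Since $F$ is $1$-homogeneous in $h$, the matrix $F^{ij}$ is $0$-homogeneous and its eigenvalues depend only on the ratios $\ka_i/\ka_j$; hence on the pinched set they are uniformly bounded above and below by positive constants. Combining with \Cref{upper bound on F}, the bounds on $\mu$ from \Cref{global term-control}, and the smoothness of $f=\ph^{-1/k}$ on $\bbS^n$, one obtains $F^{ij}h_{ik}h^k_j\leq CF^2$ and $|F^{ij}\bar\n^2 f(x_{;k},x_{;l})h^k_i h^l_j|\leq CF^2$. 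The two $F^{-2}$ factors therefore cancel and $\del_t W_{\max}\leq C$ for a constant $C$ depending only on the data of the flow.

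An elementary ODE comparison then gives $W(t)\leq W(0)+Ct$ on $[0,T)$, so $F\geq (W(0)+CT)^{-1}=:c>0$ there, which also proves the compactness of the curvatures' range. I expect the subtle point to be the uniform ellipticity of $F^{ij}$ on the pinched set as $F\to 0$: this does not follow from general curvature-function theory (which only ensures ellipticity for curvatures bounded away from $\del\Ga_+$), but rather from the degree-zero homogeneity of $F^{ij}$ together with the ratio-pinching of the $\ka_i$ provided by \Cref{pinching est}.
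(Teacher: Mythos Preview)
Your proposal is correct and follows essentially the same route as the paper: both apply the maximum principle to $W=F^{-1}$, use $\n F=0$ at the spatial maximum to kill the gradient terms, and then bound $F^{-2}F^{ij}h_{ik}h^{k}_{j}$ by a constant via the pinching. The only cosmetic difference is that the paper obtains $F^{ij}h_{ik}h^{k}_{j}\leq \ka_{n}F$ directly from Euler's identity $F^{ij}h_{ij}=F$, whereas you bound $F^{ij}$ and $\ka_{i}^{2}$ separately using the $0$-homogeneity of $F^{ij}$; both lead to the same ODE inequality $\del_{t}W_{\max}\leq C$ and hence $F\geq (W(0)+CT)^{-1}$.
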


\pf{
By \eqref{Ev-F}, \Cref{pinching est}, and \Cref{global term-control}, at any maximum point of $F^{-1}$ we have
\eq{\cL\br{\fr 1F}\leq \fr{c}{F^{2}}F^{ij}h_{ik}h^{k}_{j}\leq c\fr{\ka_{n}}{F}\leq c,}
where we have used the homogeneity of $F$ and
\eq{\ka_{n}\leq c\ka_{1}\leq cF.}
Therefore, $F^{-1}-ct$ is bounded by its initial value.
}

\section{Long time existence and convergence}\label{sec:LTE}
On a finite time interval, we have collected all the required estimates for establishing the higher order regularity estimates and hence the long-time existence. In this regard, one commonly used procedure is to parameterize the flow hypersurface using a graph representation over a fixed Riemannian manifold. For example, convex hypersurfaces $M$ can be written as a graph in polar coordinates around an interior point $x_0$,
\eq{M=\{(\rho(y),y)\cn y\in \bbS^n\},}
where around $x_0$ we identify the flat Euclidean space with
\eq{(r,y)\in \bbR^{n+1}\bs\{x_0\}=(0,\8)\x\bbS^n,\quad \ip{\cdot}{\cdot}=dr^2+r^2 \bar g(y).}
Along our flow, as long as the graph functions of the flow hypersurfaces are well defined, they satisfy the equation
\eq{\label{Ev-rho}\del_{t}\rho=(\mu f-F)v\equiv \Phi(t,y,\rho,\bar\n\rho,\bar\n^2\rho),}
where $\rho=\rho(t,y)$, cf. \cite[p.~98-99]{Gerhardt:/2006}. In local curvature flow problems, i.e., in absence of a global term, one often uses the regularity theorem of Krylov and Safonov \cite[Sec.~5]{Krylov:/1987} (see e.g., \cite[Thm.~4]{Andrews:/2004} for a clear formulation) to deduce $C^{2,\al}$-estimates for $\rho$ from its $C^{2}$-estimates.

In presence of a global term this practice is unjustified, since the bound one obtains depends on the time derivative of $\Phi$, which is not yet under control in our situation.

Hence we will take a different approach, which has been employed in \cite{Cabezas-Rivas2010a, McCoy2005}, and also in \cite{Makowski:01/2013} where it is presented in great detail. We will make some adjustments to fit our needs.

\begin{lemma}\label{Holder}
Along the flow \eqref{CM-flow} on $[0,T)$, $F=F$ and $\mu f-F$ are H\"older continuous with exponent and H\"older norms independent of $T$. Here both functions are understood to depend on the variable $(t,y)\in [0,T)\x \bbS^n$.
\end{lemma}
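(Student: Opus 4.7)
The strategy is to apply the parabolic Krylov--Safonov H\"older estimate to the scalar quantity $F$, and then to deduce H\"older control of the global factor $\mu(t)$ from the integral formula defining it.

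The first step is to view the evolution equation \eqref{Ev-F} as a linear parabolic equation for $F$. Rearranging and absorbing the first-order term $\mu F_{;k}\bar\n f(x_{;k})$ and the zero-order term $F^{ij}h_{ik}h_{j}^{k}F$ into a drift $b^{k}$ and a reaction $a$ respectively, \eqref{Ev-F} becomes
\eq{\del_{t}F-F^{kl}\n_{k}\n_{l}F+b^{k}F_{;k}+aF=c,}
where $a,b^{k},c$ are bounded in $L^{\8}$ uniformly in $T$ by \Cref{pinching est,upper bound on F,lower bound on F,global term-control}. By \Cref{pinching est,upper bound on F,lower bound on F} the principal curvatures range in a compact subset of $\Ga_{+}$, so the eigenvalues of $(F^{kl})$ are trapped between two positive constants independent of $T$ and the equation is uniformly parabolic. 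Parametrizing the flow hypersurfaces as radial graphs $\rho(t,y)$ over $\bbS^{n}$ and using the uniform $C^{2}$-bounds on $\rho$ (provided by \Cref{lower-upper bnds on sp func,upper bound on F,lower bound on F}), the pulled-back equation on the fixed product $[0,T)\x\bbS^{n}$ retains bounded measurable coefficients, bounded Christoffel symbols, and the same ellipticity constants. The parabolic Krylov--Safonov H\"older estimate \cite[Sec.~5]{Krylov:/1987} then produces, for some $\al\in(0,1)$, a bound
\eq{[F]_{C^{\al,\al/2}([0,T)\x\bbS^{n})}\leq C,}
with $\al$ and $C$ independent of $T$.

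In the second step I upgrade this to a H\"older estimate for $\mu f-F$. Since $p_{k}=F^{-k}$ in the Gauss map parametrization, the global factor can be written as
\eq{\mu(t)=\fr{\int_{\bbS^{n}}F^{-(k-1)}d\om}{\int_{\bbS^{n}}\ph^{-\fr 1k}F^{-k}d\om}.}
Both integrands are uniform H\"older functions in $(t,y)$, bounded and bounded away from zero, hence numerator and denominator are H\"older in $t$ of exponent $\al/2$; \Cref{global term-control} provides the needed lower bound on the denominator, whence $\mu\in C^{\al/2}([0,T))$. Independently, $\rho$ is uniformly $C^{2}$ in space and $\del_{t}\rho$ is bounded, so a standard mollifier-interpolation argument yields $\bar\n\rho\in C^{\al',\al'/2}$ for some $\al'>0$; consequently the unit normal $\nu$ and $f=\ph^{-\fr 1k}(\nu)$ are H\"older on $[0,T)\x\bbS^{n}$. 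Combining the H\"older regularity of $\mu$, $f$, and $F$ gives $\mu f-F\in C^{\be,\be/2}([0,T)\x\bbS^{n})$ for some $\be>0$, with norm independent of $T$.

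The main technical obstacle is arranging the pull-back so that the scalar equation for $F$ genuinely has bounded measurable coefficients and uniform parabolicity in the fixed $(t,y)$-coordinates, in spite of the flow speed containing the non-local term $\mu(t)$; the crucial observation is that $\mu$ enters \eqref{Ev-F} only as a bounded coefficient, so Krylov--Safonov applies despite its non-local nature, and the H\"older regularity of $\mu(t)$ itself is recovered a posteriori from the integral formula above.
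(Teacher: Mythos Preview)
Your overall approach—linearise \eqref{Ev-F}, apply Krylov--Safonov to $F$, then deduce regularity of $\mu f-F$—is the paper's strategy as well. But there is a genuine gap in the $T$-independence you claim, and it enters in two places.

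First, you invoke \Cref{lower bound on F} to say ``the principal curvatures range in a compact subset of $\Ga_+$'' and hence $(F^{kl})$ has eigenvalues trapped between positive constants independent of $T$. However, \Cref{lower bound on F} explicitly gives only a $T$-\emph{dependent} lower bound (its proof yields $F^{-1}\le F^{-1}(0)+cT$). The paper instead obtains uniform ellipticity from the pinching estimate alone: since $F^{kl}$ is $0$-homogeneous in the curvatures and \Cref{pinching est} confines $\ka$ to a fixed subcone of $\Ga_+$, the eigenvalues of $F^{kl}$ are controlled by the pinching ratio regardless of the size of $\ka$. This repair is easy once noticed.

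Second, and more seriously, your route to H\"older control of $\mu(t)$ via the formula $\mu=\int F^{-(k-1)}\,d\om\big/\int\ph^{-1/k}F^{-k}\,d\om$ requires a pointwise lower bound on $F$ to make $F^{-k}$ bounded and H\"older; with only the $T$-dependent bound from \Cref{lower bound on F}, your H\"older constant for $\mu$ will grow with $T$, contradicting the lemma's statement. The paper sidesteps this: it remarks that ``a very similar argument applies to $\mu f-F$'', and in practice the only place $\mu f-F$ is actually used (namely \Cref{C2alpha}(i)) needs just its \emph{spatial} H\"older regularity at a fixed time, which is immediate once $F$ is spatially H\"older, since $\mu(t)$ is then a constant and $f(\nu)$ is Lipschitz in $y$. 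Time-H\"older of $\mu$ is only recovered at the very end of \Cref{C2alpha}, after the $C^{2,\al}$ estimates on $\rho$ are already in hand.

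A smaller omission: you parametrise as radial graphs ``over $\bbS^n$'' without saying around which centre. No single centre works for all $t$; the paper uses \Cref{starshaped} to fix a centre valid on time-intervals of uniform length $c_{\de,\ep}$ and runs the estimate interval by interval. This also accounts for the extra drift term $(\mu f-F)\nu^m$ that appears in the paper's $b^m$ when passing from the flow variable $\xi$ to the graph variable $y$.
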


\pf{
We transform the evolution equation of $F$ to an equation on the standard round sphere and apply general regularity results. To do this, we will need \Cref{starshaped}.

Note that the inradius is uniformly bounded from below and hence we find $\de>0$, such that for every $t_{0}\in [0,T)$ we have a uniform graph representation of $M_{t}$ over an interior sphere on the time interval
\eq{I_{t_{0}}:= [t_{0},\min(t_{0}+c_{\de,\ep},T)).}
Due to \Cref{starshaped}, the corresponding radial function
\eq{\rho\cn I_{t_{0}}\x \bbS^{n}\ra \bbR}
is uniformly bounded above and below and parameterizes the convex hypersurfaces $M_{t}$. From \cite[Thm.~2.7.10]{Gerhardt:/2006} we infer that
\eq{v^{2}=1+\rho^{-2}\bar g^{ij}\del_{i}\rho\del_{j}\rho}
is uniformly bounded on $I_{t_{0}}$. Since the Weingarten operator is uniformly bounded by \Cref{upper bound on F}, we also have a bound for the second derivatives,
\eq{\abs{\bar\n^{2}\rho}_{\bar g}\leq c(\abs{\bar\n\rho}_{\bar g},\abs{A}).}
Note that none of these bounds depend on $T$.
On $I_{t_{0}}$ and in the above constructed graph coordinates we define:
\eq{a^{ij}=F^{ij},\q b^{m}=F^{kl}(\bar\Ga^{m}_{kl}-\Ga^{m}_{kl})-\mu\bar\n f(x_{;k})g^{km}-(\mu f-F)\nu^m,}
\eq{d_{1}= F^{ij}h_{ik}h^{k}_{j},\q d_{2}=-\mu F^{ij}h_{ik}h^{k}_{j}f-\mu F^{ij}h^{k}_{i}h^{l}_{j}\bar\n^{2}f(x_{;k},x_{;l}),}
where $\bar\Ga$ and $\Ga$ are the Christoffel symbols of the round and the induced metric respectively.
Note that \eqref{Ev-F} is the evolution equation of
\eq{F=F(t,\xi)=F(t,y(x(t,\xi))).}
Since we need the H\"older regularity of $F$ with respect to the $y$-variable as this is the way we deal with $\rho$, we have to adjust the evolution equation \eqref{Ev-F} with the help of the relation
\eq{\partial_tF=\fr{d}{dt}F(t,y(x(t,\xi)))=\fr{\del F}{\del t}(t,y)+\bar\n_m F (t,y)\del_t x^m(t,\xi). }
Hence in the $(t,y)$ coordinates,
\eq{F\cn I_{t_{0}}\x\bbS^{n}\ra \bbR}
 satisfies the equation
\eq{\label{Holder-1}-\del_{t}F+ a^{ij}\bar\n^{2}_{ij}F+b^{m}\bar\n_{m}F+d_{1}F+d_{2}=0.}
Here
\eq{a^{ij}=g^{ik}F^{j}_{k}}
is uniformly elliptic (note that $g_{ij}$ is equivalent to $\bar g_{ij}$) due to the pinching, and $d_{1}$, $\abs{d_{2}}$ are bounded. In order to see that $b^{m}$ is bounded as well, first we note that $\nu^m$ is bounded with respect to $\bar g$ due to the convexity, and then we have to convince ourselves that the tensor $\bar\Ga^{m}_{kl}-\Ga^{m}_{kl}$
 is bounded. However, since
 \eq{g_{ij}=\del_{i}\rho\del_{j}\rho+\rho^{2}\bar g_{ij},}
 we see that $\Ga^{m}_{kl}$ is just an algebraic combination of bounded quantities (a similar argument was used in \cite[Lem.~7.6]{Gerhardt:/2015}). Hence \eqref{Holder-1} has bounded coefficients and neither their bounds nor the ellipticity constants depend on $T$.
Hence we can apply the regularity result \cite[Cor.~7.41]{Lieberman:/1998}, see also \cite[Thm.~3.25]{Schnurer:PDE2}, to obtain the H\"older regularity of $F$.
A very similar argument applies to $\mu f-F$.
}

In the next lemma we prove the $C^{2,\alpha}$ estimates for $\rho$ in several steps.

\begin{lemma}\label{C2alpha}
For any $t_{0}\in [0,T),$ let $c_{\de,\ep}, x_{0}$ be as in \Cref{starshaped} and
\eq{\rho\cn I_{t_{0}}\x\bbS^{n}\ra \bbR}
be a graph representation of $M_{t}$ in polar coordinates around $x_{0}$,
where
\eq{I_{t_{0}}:= [t_{0},\min(t_{0}+c_{\de,\ep},T)).}
Then for some uniform $\al>0$ we have
\enum{
\item \eq{\rho(t,\cdot)\in C^{2,\al}(\bbS^{n})\q\fa t\in I_{t_{0}},}
\item \eq{\bar\n \rho(\cdot,y)\in C^{0,\fr{1+\al}{2}}(I_{t_{0}})\quad 	\mbox{uniformly in}~ y\in \bbS^{n},}
\item \eq{\bar\n^{2}\rho(\cdot,y)\in C^{0,\fr{\al}{2}}(I_{t_{0}})\quad 	\mbox{uniformly in}~ y\in \bbS^{n}.}
}
The bounds on the $C^{2,\al}$-norm depend on the data of the problem and are independent of $T$. In particular we also have
\eq{\abs{\rho}_{2+\al,\fr{2+\al}{2};I_{t_{0}}}\leq \const,}
where the constant only depends on the data of the problem.
\end{lemma}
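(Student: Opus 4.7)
The plan is to upgrade the uniform $C^{2}$-bounds on $\rho$ to parabolic $C^{2,\al}$-bounds by routing the estimate through the H\"older regularity supplied by \Cref{Holder}, thereby avoiding the obstruction to a direct Krylov--Safonov argument caused by the global term.

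\textbf{Step 1 (H\"older control of the time derivative).} From \Cref{Holder}, both $F$ and $\mu f-F$ are H\"older continuous on $(t,y)\in I_{t_{0}}\x \bbS^{n}$ with norms independent of $T$. Since $f>0$ is fixed and $t$-independent, this forces $\mu(t)$ itself to be H\"older in $t$, uniformly in $T$. The factor $v=\sqrt{1+\rho^{-2}\bar{g}^{ij}\bar\n_{i}\rho\,\bar\n_{j}\rho}$ is bounded above and below away from zero by the uniform $C^{1}$-bound on $\rho$ and the lower bound $\rho\geq\de$; interpolating the uniform spatial $C^{2}$-bound on $\rho$ against $\abs{\del_{t}\rho}\leq c$ shows that $\bar\n\rho$ is H\"older continuous in $t$ (with exponent $\tfrac{1}{2}$) uniformly in $y$. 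Hence $v$ is H\"older in $(t,y)$ and so is $\del_{t}\rho=(\mu f-F)v$.

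\textbf{Step 2 (Spatial $C^{2,\al}$ via Evans--Krylov).} At each fixed $t$ I would read the flow as the fully nonlinear elliptic equation
\[F(\ka[\rho(t,\cdot)])=\mu(t)\,f(\nu[\rho])-\del_{t}\rho/v\;\equiv\;G(t,y),\]
whose right-hand side $G$ is H\"older in $y$ uniformly in $t$ by Step 1. Uniform ellipticity follows from \Cref{pinching est} together with \Cref{upper bound on F} and \Cref{lower bound on F}. Since $F=(\si_{n}/\si_{n-k})^{1/k}$ is concave on $\Ga_{+}$, and concavity is preserved in the graph operator modulo bounded first-order terms, Evans--Krylov applied on coordinate charts of $\bbS^{n}$ yields (i): $\abs{\rho(t,\cdot)}_{C^{2,\al}(\bbS^{n})}\leq C$ uniformly in $t\in I_{t_{0}}$, with $\al$ and $C$ independent of $T$.

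\textbf{Step 3 (Parabolic upgrades).} With (i) in hand, differentiate the flow equation $\del_{t}\rho=\Phi(t,y,\rho,\bar\n\rho,\bar\n^{2}\rho)$ in $y^{i}$ to obtain the linear parabolic equation
\[\del_{t}(\del_{i}\rho)=\Phi_{r_{kl}}\,\bar\n_{k}\bar\n_{l}(\del_{i}\rho)+\text{lower-order terms},\]
whose leading coefficients $\Phi_{r_{kl}}$ are H\"older in $(t,y)$: in $y$ by (i), and in $t$ by the Lipschitz-in-$t$ control of $\rho$, the interpolation of Step 1 for $\bar\n\rho$, and the H\"older regularity of $\mu(t)$. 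Parabolic Schauder theory (in the form used already in the proof of \Cref{Holder}, e.g.\ \cite[Cor.~7.41]{Lieberman:/1998}) applied to this linear equation produces joint parabolic regularity for $\del_{i}\rho$, giving (ii). Reading the rewritten equation $F(\ka[\rho])=G(t,y)$ once more as a uniformly elliptic identity with $G$ H\"older in $(t,y)$ and inverting then yields H\"older continuity in $t$ of $\bar\n^{2}\rho$ with exponent $\al/2$, which is (iii); combining these gives the asserted $\abs{\rho}_{2+\al,\fr{2+\al}{2};I_{t_{0}}}\leq\const$.

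\textbf{Main obstacle.} The crux is legitimizing Evans--Krylov in Step 2: the concavity of $F$ in the second fundamental form transfers to the operator acting on $\bar\n^{2}\rho$ only modulo lower-order perturbations, and one must verify that these perturbations do not disturb the concavity structure needed for the estimate. A secondary point is to keep every constant $T$-independent; this is ensured because \Cref{Holder}, the pinching, and the curvature bounds of \Cref{Curv-est} are all $T$-uniform and the argument is carried out only on the fixed-length intervals $I_{t_{0}}$.
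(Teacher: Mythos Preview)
Your Steps~1--2 track the paper closely: fix $t_{1}$, use \Cref{Holder} to make the right-hand side $\psi_{t_{1}}=\mu f-\del_{t}\rho/v$ spatially H\"older, and invoke concave elliptic regularity to get (i). The paper handles your stated ``main obstacle'' (concavity in $\bar\n^{2}\rho$ versus in $A$) by freezing the dependence on $(\rho,\bar\n\rho)$, using a Bellmann extension, and applying the Caffarelli--Cabr\'e interior estimate rather than Evans--Krylov; this sidesteps the issue cleanly.

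Step~3 has a genuine gap. The leading coefficients $\Phi_{r_{kl}}=-v\,F^{j}_{i}\,\del h^{i}_{j}/\del r_{kl}$ depend on the Weingarten map and hence on $\bar\n^{2}\rho$. Your claimed H\"older-in-$t$ regularity of these coefficients lists only $\rho$, $\bar\n\rho$, and $\mu$ as inputs, but it would also require H\"older-in-$t$ control of $\bar\n^{2}\rho$, which is exactly (iii); so the Schauder argument is circular. If instead you mean to apply the Krylov--Safonov type result you cite (which needs only bounded coefficients), the circularity disappears but the conclusion is too weak: it yields $\bar\n\rho\in C^{0,\al/2}$ in time, not $C^{0,(1+\al)/2}$, and says nothing about $\bar\n^{2}\rho$ in time. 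Finally, the ``inverting'' step for (iii) cannot work as stated: $F(\ka[\rho])=G$ is a single scalar equation, and H\"older regularity of $G$ and of $(\rho,\bar\n\rho)$ does not force H\"older regularity of the full Hessian $\bar\n^{2}\rho$.

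The paper's route for (ii) and (iii) avoids both problems by passing to the spatial difference quotient $\de_{\tau}\rho$. A mean-value computation shows that $\de_{\tau}\rho$ satisfies a linear parabolic equation whose leading part $\hat g^{ik}(v\rho^{2})^{-1}\cF^{j}_{i}\bar\n^{2}_{kj}$ is uniformly elliptic by the pinching, with coefficients and inhomogeneity that are merely \emph{bounded}. One then applies Andrews' H\"older estimates \cite[Sec.~3.3--3.4]{Andrews:/2004}, which are designed precisely for this situation and directly produce both the $C^{0,(1+\al)/2}$-in-time control of $\bar\n\rho$ and the $C^{0,\al/2}$-in-time control of $\bar\n^{2}\rho$. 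Only after this is $\mu\in C^{\al/2}$ used to assemble the full parabolic $C^{2+\al}$ norm.
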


\pf{
(i)~Fix $t_{1}\in I_{t_{0}}$ and note that due to \Cref{Holder},
\eq{\psi_{t_{1}}:=\mu(t_{1})f(\nu(t_{1},\cdot))-\fr{\del_{t}\rho(t_{1},\cdot)}{v(t_{1},\cdot)}\in C^{0,\al}(\bbS^{n}).}
Equation \eqref{Ev-rho} implies that $\rho(t_{1},\cdot)$ solves the elliptic equation
\eq{\label{spatial-1}F(\cdot,\rho,\bar\n\rho,\bar\n^{2}\rho)=\psi_{t_{1}}}
with a concave operator $F$.
Since we have uniform spatial $C^{2}$-estimates for $\rho$, we can treat $\rho$ and $\bar\n\rho$ as data of the operator and view \eqref{spatial-1} as the problem
\eq{\label{spatial-2}G(\cdot,\bar\n^{2}\rho)=\psi_{t_{1}}.}
After invoking the Bellmann-extension to make $F$ well-defined on $\bbS^n\x\bbR\x\bbR^{n}\x\bbR^{n}_{\mrm{sym}}$ as in \cite[Lem.~7.2]{Makowski:01/2013}, we can apply a result of Caffarelli--Cabr\'e \cite[Thm.~8.1]{CaffarelliCabre:/1995} and the subsequent remarks (see also \cite[Thm.~7.3]{Makowski:01/2013}) to \eqref{spatial-2} to obtain the spatial H\"older regularity at time $t_{1}$. Since none of the constants involved here depend on $T$ and $t_{1}$ is arbitrary, the result follows.

To prove (ii) and (iii), we apply Andrews' estimates from \cite[Sec.~3.3, 3.4]{Andrews:/2004}.
To do this, we need to deduce an equation for the difference quotient in any direction $e_{\ell}$, $1\leq \ell\leq n$,
\eq{\de_{\tau}\rho(t,\xi)=\fr{\rho(t,\xi+\tau e_{\ell})-\rho(t,\xi)}{\tau}.}
Here we are considering $\xi\in \bbR^{n}$ to be in a fixed coordinate chart of $\bbS^{n}$. A similar calculation appeared in \cite[Lem.~4.1]{Sani:/2017}. Recalling that
\eq{\del_{t}\rho(t,\cdot)&=(\mu(t)f(\cdot,\rho,\bar\n\rho)-F(\cdot,\rho,\bar\n\rho,\bar\n^{2}\rho))v\\
				&=(\mu(t)f(\nu)-F(A))v,}
we calculate the evolution of $\de_{\tau}\rho$ as follows:
\eq{\del_{t}\de_{\tau}\rho=\tau^{-1}((\mu f-F(A))_{|(t,\xi+\tau e_{\ell})}-(\mu f-F(A))_{|(t,\xi)})v.}
The crucial term arises from $F$. Hence, using that the domain of $F$ is convex, we continue with
\eq{&\tau^{-1}(vF(A)_{|(t,\xi+\tau e_{\ell})}-vF(A)_{|(t,\xi)})\\
	=&\tau^{-1}\int_{0}^{1}\fr{d}{ds}\br{v(t,\xi+s\tau e_{\ell})F\br{s A_{|(t,\xi+\tau e_{\ell})}+(1-s)A_{|(t,\xi)}}}~ds\\
	=&\int_{0}^{1}\fr{\del v}{\del \xi^{\ell}}(s) F\br{s A_{|(t,\xi+\tau e_{\ell})}+(1-s) A_{|(t,\xi)}}~ds\\
	&+\tau^{-1}\int_{0}^{1}v(s)F^{j}_{i}(s)(h^{i}_{j}(t,\xi+\tau e_{\ell})-h^{i}_{j}(t,\xi))~ds}
We can use \cite[(1.5.10), Lem.~2.7.6]{Gerhardt:/2006} write $h$ in terms of $\bar\n^{2}\rho$,
\eq{h^{i}_{j}=\fr{1}{v\rho}\de^{i}_{j}+\fr{1}{v^{3}\rho^{3}}\bar \n^{i}\rho\bar\n_{j}\rho-\fr{\hat g^{ik}}{v\rho^{2}}\bar\n^{2}_{kj}\rho,}
where $\hat g^{ik}$ is the inverse of
\eq{\hat g_{ik}=\rho^{-2}\bar\n_{i} \rho\bar\n_{k} \rho+\bar g_{ik}.}
Defining
\eq{\cF^{j}_{i}:=\int_{0}^{1}v(t,\xi+s\tau e_{\ell})F^{j}_i(s A_{|(t,\xi+\tau e_{\ell})}+(1-s) A_{|(t,\xi)})~ds}
and using that the second spatial derivatives as well as $\mu$ are bounded, we see that $\de_{\tau}\rho$ satisfies the equation
\eq{\del_{t}\de_{\tau}\rho&=-\cF^{j}_{i}\de_{\tau}h^{i}_{j}+\Psi_1=\fr{\hat g^{ik}}{v\rho^{2}}\cF^{j}_{i}\bar\n^{2}_{kj}\de_{\tau}\rho+\Psi_2,}
 and where uniformly in $t$ there holds
\eq{\abs{\Psi_2(t,\cdot)}\leq c(\ep,\abs{\rho(t,\cdot)}_{C^{2}(\bbS^{n})}).}
Note that $\cF^{j}_{i}$ is uniformly elliptic, since $A_{|(t,\xi+\tau e_{\ell})}$ as well as $A_{|(t,\xi)}$ both range in a closed, convex and strict subcone of $\Ga_{+}$, due to the pinching.
Now the estimates from \cite[Sec.~3.3, 3.4]{Andrews:/2004} go through and yield (ii) and (iii). The full parabolic H\"older estimates (cf., \cite[Ex.~2.5.3]{Gerhardt:/2006}) now follow from the definition of $\mu$, which gives
\eq{\mu\in C^{\fr{\al}{2}}([0,T)),}
and finishes the proof of \Cref{C2alpha}.
}

A standard bootstrapping argument involving Schauder estimates as in \cite[Thm.~2.5.9]{Gerhardt:/2006} yields the following proposition.

\begin{prop}\label{LTE}
For an arbitrary strictly convex initial hypersurface the flow \eqref{gen-flow} exists for all times.
\end{prop}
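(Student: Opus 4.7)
The plan is a standard contradiction argument: assume the maximal existence time $T<\infty$, and use the estimates already established in Sections \ref{prelim}--\ref{sec:LTE} to extend the flow past $T$.

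First, I would fix the polar graph parametrization from the previous section, so that the flow is equivalent to the scalar parabolic equation \eqref{Ev-rho} for $\rho\colon[0,T)\times\bbS^n\to\bbR$. By \Cref{lower-upper bnds on sp func} and \Cref{starshaped}, on each short interval $I_{t_0}$ of uniform length we have a valid such graph representation with $\rho$ uniformly bounded above and below away from zero. \Cref{C2alpha} supplies the bound
\eq{|\rho|_{2+\al,\frac{2+\al}{2};I_{t_0}}\leq C,}
with $C$ and $\al$ independent of $t_0$ and of $T$.

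Next, I would feed this $C^{2,\al}$ bound into a Schauder bootstrap. Differentiating \eqref{Ev-rho} spatially, the difference quotients $\delta_\tau\rho$ solve a linear parabolic equation whose coefficients depend on $\rho$, $\bar\n\rho$, $\bar\n^2\rho$ and on $\mu(t)$. The coefficients are uniformly $C^{\al,\al/2}$ in $(t,y)$ on each $I_{t_0}$ because (a) the spatial derivatives of $\rho$ are under control by \Cref{C2alpha}, (b) the concave operator $F$ and its derivatives are smooth functions of these arguments in the range dictated by the pinching (\Cref{pinching est}) and the bounds in \Cref{upper bound on F}--\Cref{lower bound on F}, and (c) the global factor $\mu$ lies in $C^{\al/2}([0,T))$, as observed at the end of the proof of \Cref{C2alpha}. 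The interior parabolic Schauder estimates (as in \cite[Thm.~2.5.9]{Gerhardt:/2006}) upgrade this to $\bar\n\rho\in C^{2+\al,\frac{2+\al}{2}}$, i.e.\ $\rho\in C^{3+\al,\frac{3+\al}{2}}$ uniformly on $I_{t_0}$, hence on all of $[0,T)$ by covering. Iterating (each round improves both $\mu$ and the coefficients by one H\"older order), we obtain uniform $C^{k,k/2}$ bounds on $\rho$ for every $k\in\bbN$, all independent of $T$.

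With these bounds in hand, $\rho(t,\cdot)$ converges in $C^\infty(\bbS^n)$ as $t\to T^-$ to a smooth, strictly convex limit $\rho_T$ (strict convexity is preserved by \Cref{pinching est}, and the lower bound on $F$ from \Cref{lower bound on F} rules out curvature degeneration in finite time). Short-time existence, applied with $\rho_T$ as new initial datum, then extends the flow to a slightly larger interval, contradicting the maximality of $T$. Therefore $T=\infty$.

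The only step that is not entirely routine is handling the nonlocal factor $\mu(t)$ in the Schauder iteration: one must check that at each level of regularity the ratio defining $\mu$ inherits the regularity of the integrands (which it does, since both the numerator and the denominator are positive and smooth functions of $\rho$ and its derivatives by \Cref{global term-control}). Once this is noted, the bootstrap is entirely standard.
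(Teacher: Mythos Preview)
Your proposal is correct and follows precisely the route the paper indicates: the paper's ``proof'' consists of a single sentence invoking a standard bootstrapping argument with Schauder estimates (citing \cite[Thm.~2.5.9]{Gerhardt:/2006}) applied on top of the $C^{2,\alpha}$ bound from \Cref{C2alpha}, and you have simply spelled this out in detail, including the necessary remark that $\mu\in C^{\alpha/2}$ upgrades along with the iteration. Your use of the $T$-dependent lower bound on $F$ from \Cref{lower bound on F} is also appropriate here, since the contradiction argument only requires estimates on a finite interval.
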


Due to the lack of a uniform lower bound on $F$, we cannot yet obtain \textit{uniform} $C^{\8}$-estimates on $[0,\8)$, as the principal curvatures might leave every compact subset of $\Ga_{+}$. In the next lemma, the missing uniform lower $F$ bound will be obtained by combining a local maximum principle \cite[Thm.~7.36]{Lieberman:/1998} with the weak Harnack inequality \cite[Thm.~7.37]{Lieberman:/1998} (see also the proof of \cite[Lem.~7.7]{Gerhardt:/2015}). Together with the pinching estimate, this ensures uniform estimates on the principal curvatures and allows us to perform the above procedure with $T=\infty$ to obtain the uniform $C^{\8}$-estimates.

\begin{lemma}\label{Harnack}
Along the flow \eqref{gen-flow}, $F$ is uniformly bounded from below,
\eq{F\geq \const >0.}
In particular, during the whole evolution the principal curvatures range in a compact subset of $\Ga_{+}$.
\end{lemma}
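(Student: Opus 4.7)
The plan is to realize $F$ as a solution of a uniformly parabolic linear PDE on $\bbS^{n}$ whose coefficient bounds are independent of $T$, and to combine that with a uniform integral control of $p_{k}$ via Lieberman's parabolic local maximum principle / weak Harnack inequality.

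\emph{PDE setup.} Exactly as in the proof of \Cref{Holder}, one rewrites \eqref{Ev-F} on each slab $I_{t_{0}}\x\bbS^{n}$ in the polar graph parametrization to arrive at a linear equation
\eq{-\del_{t}F+a^{ij}\bar\n^{2}_{ij}F+b^{m}\bar\n_{m}F+d_{1}F+d_{2}=0,}
with uniformly bounded coefficients and uniform ellipticity; the former is guaranteed by \Cref{upper bound on F} and \Cref{global term-control}, the latter by the pinching estimate \Cref{pinching est}. Crucially, none of these bounds depends on $T$.

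\emph{Integral control.} In the support function description $F=p_{k}^{-1/k}$, where $p_{k}=p_{k}(\bar\n^{2}s+s\bar g)$. The conservation law \eqref{k+1-volume}, the translation invariance $\int_{\bbS^{n}} u\,p_{k}\,d\om=0$ inherited from the $(n{+}1{-}k)$-th surface area measure, and the uniform positive lower bound on $\ti s$ from \Cref{lower-upper bnds on sp func} together yield
\eq{\int_{\bbS^{n}}p_{k}\,d\om\leq \fr{1}{\min \ti s}\int_{\bbS^{n}}\ti s\,p_{k}\,d\om=\fr{1}{\min \ti s}\int_{\bbS^{n}}s\,p_{k}\,d\om\,\Big|_{t=0}\leq C}
uniformly in $t$; equivalently, $\|1/F\|_{L^{k}(\bbS^{n})}\leq C$.

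\emph{Harnack step.} Setting $u=1/F$, a direct computation from \eqref{Ev-F} gives
\eq{\cL u=-\fr{\cL F}{F^{2}}-2F\,F^{ij}\bar\n_{i}u\,\bar\n_{j}u,}
where under the $T$-independent bounds of \Cref{pinching est}, \Cref{upper bound on F}, and \Cref{global term-control}, the first summand is bounded above while the quadratic gradient term has the good sign. Thus, in the graph parametrization, $u$ is a subsolution of a uniformly parabolic linear equation with bounded coefficients and bounded right-hand side, and Lieberman's local maximum principle \cite[Thm.~7.36]{Lieberman:/1998} yields
\eq{\sup u\leq C\br{\|u\|_{L^{k}(\bbS^{n})}+1}\leq C',}
uniformly in $t_{0}$, so $F\geq c>0$ on $[0,\8)$. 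An equivalent route, closer to the hint in the paper and to \cite[Lem.~7.7]{Gerhardt:/2015}, is to upgrade the integral bound to $\int F\,d\om\geq c>0$ via Cauchy--Schwarz, and then---since $\cL F\geq -C$ makes $F$ a supersolution---to apply the weak Harnack inequality \cite[Thm.~7.37]{Lieberman:/1998} together with the H\"older continuity from \Cref{Holder}.

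The main technical obstacle is the uniformity-in-$T$ of the sphere-PDE coefficient bounds. However, this reduces entirely to the $T$-independent estimates already assembled in \Cref{pinching est}, \Cref{upper bound on F}, \Cref{Holder}, and \Cref{C2alpha}; no new estimate is required.
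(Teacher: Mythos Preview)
Your argument is correct and, in its main route via $u=1/F$, is genuinely different from the paper's. The paper works with $F$ itself: it shows $|L_{0}F|\leq \ga F$ in the sphere parametrization, applies the local maximum principle \cite[Thm.~7.36]{Lieberman:/1998} to $e^{-\ga(t-t_{0})}F$ to obtain $\sup F\leq c\|F\|_{L^{p}}$, uses the circumscribed-sphere comparison to get $\sup_{M_{t}}F\geq c_{1}$ and thereby a lower bound on $\|F\|_{L^{p}}$, and finally applies the weak Harnack inequality \cite[Thm.~7.37]{Lieberman:/1998} to $e^{\ga(t-t_{0})}F$ to pass from the $L^{p}$-bound to $\inf F\geq c$. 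Your approach instead extracts the integral input directly from the conserved quantity \eqref{k+1-volume}: since $\int u\,p_{k}\,d\om=0$ and $\min\ti s>0$ uniformly, one gets $\int_{\bbS^{n}}(1/F)^{k}\,d\om=\int_{\bbS^{n}}p_{k}\,d\om\leq C$, and then the subsolution inequality for $u=1/F$ (the quadratic gradient term has the right sign to be discarded; the remaining zero-order terms are bounded by pinching, \Cref{upper bound on F}, and $A:=F^{ij}h_{ik}h^{k}_{j}\approx F^{2}$) lets you apply the local maximum principle \emph{once} to conclude. This is more economical---one Lieberman theorem instead of two, and no exponential weights---and ties the estimate directly to the structural conservation law of the flow rather than to a separate geometric comparison. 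The paper's approach, on the other hand, stays entirely on the $F$ side and would transfer verbatim to flows that do not come with such a convenient conserved $\int s\,p_{k}$; your ``alternative route'' (lower bound on $\int F$ via H\"older, then weak Harnack) is essentially the paper's argument with a different source for the $L^{p}$ lower bound.

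Two minor remarks. First, the $L^{p}$-norm in \cite[Thm.~7.36]{Lieberman:/1998} is a space-time norm over a parabolic cylinder; your uniform-in-$t$ spatial bound $\|u(t,\cdot)\|_{L^{k}(\bbS^{n})}\leq C$ immediately yields this after integrating in $t$ over the slab $I_{t_{0}}$, so there is no issue. Second, to cover the initial layer where no preceding time is available for the parabolic Harnack cylinder, one simply invokes the finite-time bound of \Cref{lower bound on F}; you may want to say this explicitly.
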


\pf{
Returning to the proof of \Cref{Holder}, we see from \eqref{Holder-1} that
\eq{L_{0}F:=-\del_{t}F+a^{ij}\bar\n_{ij}^{2}F+b^{m}\bar\n_{m}F=-d_{1}F-d_{2}}
in coordinate systems which are valid on intervals of fixed uniform length $c_{\de,\ep}$. We have uniform bounds on the coefficients and on the ellipticity constants. Due to the pinching estimate, \Cref{global term-control} and \Cref{upper bound on F} there holds
\eq{-\ga F\leq L_{0}F\leq \ga F }
for some constant $\ga>0$. For $0<\al\leq 1$ we define
\eq{I_{\al}=[t_{0}+(1-\al)c_{\de,\ep},t_{0}+c_{\de,\ep}).}
Applying \cite[Thm.~7.36]{Lieberman:/1998} to
\eq{L_{0}(e^{-\ga(t-t_0)}F)\geq 0,} we obtain for any $p>0$,
\eq{\label{Harnack-1}\sup_{I_{\al_{1}}\x\bbS^{n}}F\leq e^{\ga c_{\de,\ep}}\sup_{I_{\al_{1}}\x\bbS^{n}}e^{-\ga(t-t_0)}F\leq c\br{\int_{I_{\al_{2}}\x\bbS^{n}}F^{p}}^{\fr 1p},}
for sufficiently small $\al_{1}<\al_{2}$,
where $c$ depends on our previous established bounds and on $p$. Here we have implicitly used a standard covering argument to get from local parabolic cylinders to the whole of $\bbS^{n}.$ Note that for every time $t\in [0,\8)$, we have
\eq{\sup_{M_{t}}F\geq c_{1}>0}
for a small constant. This can be seen from comparison with a circumscribed sphere, the radius of which is under control due to the support function bound.

Now we define
\eq{\hat F=e^{\ga(t-t_{0})}F}
and obtain that
\eq{L_{0}\hat F=e^{\ga(t-t_{0})}(L_{0} F-\ga F)\leq 0.}
We apply \cite[Thm.~7.37]{Lieberman:/1998} and find $p>0$, such that
\eq{0<c_{1}\leq\br{\int_{[t_{0},t_{0}+\al_{3})\x\bbS^{n}}\hat F^{p}}^{\fr 1p}\leq c\inf_{I_{\al_{4}}}\hat F=c\inf_{I_{\al_{4}}}e^{\ga(t-t_{0})}F\leq c\inf_{I_{\al_{4}}}F, }
where $\al_{3}$ and $\al_{4}$ are sufficiently small and the first bound follows from \eqref{Harnack-1} and the arbitrariness of $t_{0}$. Note that \cite[Thm.~7.37]{Lieberman:/1998} requires a waiting time of $3R^{2}$, where $R$ is the size of the parabolic cylinder that is used. We can easily set up a uniform $R$ depending on the lower bound on $c_{\de,\ep}$ and a covering of $\bbS^{n}$ by open balls of fixed size. This completes the proof, since $t_{0}$ is arbitrary.
}

With this lower bound on $F$, we can obtain time-independent regularity estimates with a bootstrapping argument:

\begin{lemma}\label{full-reg-est}
Along the flow \eqref{gen-flow}, the hypersurfaces
\eq{\ti M_{t}=M_{t}-C(K_{t})}
enjoy uniform $C^{\8}$-estimates.
\end{lemma}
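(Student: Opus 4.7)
The plan is to combine the uniform lower bound on $F$ from \Cref{Harnack} with the pinching estimate of \Cref{pinching est} and the already-uniform upper bound from \Cref{upper bound on F} to conclude that the principal curvatures $\kappa_i$ stay within a fixed compact subset of $\Gamma_+$ for all $t\in [0,\infty)$. In particular, the operator governing the flow is uniformly elliptic on $[0,\infty)$ rather than merely on finite intervals. Together with the uniform two-sided bound on the support function of $\tilde M_t$ from \Cref{lower-upper bnds on sp func} and the lower bound on the inradius, this yields time-independent $C^0$ and $C^2$ control on the radial graph parametrization of $\tilde M_t$ over a fixed interior sphere (working in polar coordinates centered, say, at $C(K_t)$, which we may take as the origin after translation).

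With these genuinely $T$-independent bounds in hand, I would rerun the argument of \Cref{C2alpha} verbatim, but now taking $T=\infty$ in the definition of the time intervals $I_{t_0}=[t_0,t_0+c_{\de,\ep})$. The key observation is that each ingredient used there—the uniform ellipticity, the bound on $\cF^{j}_{i}$, the bound on the lower-order term $\Psi_2$, and the parabolic Hölder estimates of Caffarelli--Cabré and Andrews—depends only on the data that are now controlled uniformly in time for the translated hypersurfaces $\tilde M_t$. Hence one obtains uniform estimates $|\rho|_{2+\al,\frac{2+\al}{2};I_{t_0}}\leq C$ where $C$ is independent of $t_0$, and standard covering of $[0,\infty)$ by overlapping intervals $I_{t_0}$ promotes these to global-in-time $C^{2+\al,\frac{2+\al}{2}}$ bounds on $\rho$.

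Next I would bootstrap. Differentiating the graph equation $\del_t\rho=(\mu f-F)v=\Phi(t,y,\rho,\bar\n\rho,\bar\n^2\rho)$ in a tangential direction $e_\ell$ produces a linear uniformly parabolic equation for $\bar\n_\ell \rho$ with coefficients lying in the Hölder class $C^{\al,\al/2}$ just established. Interior Schauder estimates of the form in \cite[Thm.~2.5.9]{Gerhardt:/2006} give $C^{3+\al,\frac{3+\al}{2}}$ bounds on $\rho$ on each $I_{t_0}$, and iterating yields $C^{k+\al,\frac{k+\al}{2}}$ bounds for every $k$, all independent of $t_0$. At each stage the global scalar $\mu(t)$ must also be promoted in regularity; this is automatic because the defining quotient
\eq{\mu(t)=\fr{\int_{\bbS^n}p_k^{\frac{k-1}{k}}d\om}{\int_{\bbS^n}\ph^{-1/k}p_k\,d\om}}
is built from already-controlled integrands whose smoothness bounds improve at each bootstrap step, so $\mu\in C^{k+\al/2}([0,\infty))$ at the $k$-th stage.

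The main obstacle I anticipate is ensuring that all constants genuinely do not pick up implicit $T$-dependence through the global term $\mu$: the non-local nature of the flow means the parabolic regularity results must be applied on intervals of fixed length $c_{\de,\ep}$ (which \Cref{starshaped} provides uniformly thanks to the in-radius bound), and one must verify that the bootstrap passes through the definition of $\mu$ at each step. Once this is handled, combining the uniform $C^{k,\al}$ estimates for $\rho$ with the time-translation invariance of the argument produces uniform $C^\infty$ bounds on the family $\{\tilde M_t\}_{t\geq 0}$, finishing the proof.
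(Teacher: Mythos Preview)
Your proposal is correct and follows exactly the route the paper intends: the paper itself gives no detailed proof of this lemma, merely noting that with the uniform lower bound on $F$ from \Cref{Harnack} in hand, the $T$-independent estimates of \Cref{Holder} and \Cref{C2alpha} can be rerun on intervals $I_{t_0}\subset[0,\infty)$ of fixed length, after which the bootstrapping via Schauder (already invoked for \Cref{LTE}) yields uniform $C^\infty$-bounds. Your elaboration of the bootstrap and the handling of the global term $\mu$ is a faithful unpacking of this.
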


We complete the proof of \Cref{CM-flow}. By \eqref{derivative of functional}, we have
\eq{
\fr{d}{dt}\int_{\bbS^{n}}\tilde{s}\varphi d\om
=&\fr{\int_{\bbS^{n}}\tilde{p}_{k}^{\fr{k-1}{k}}d\om}{\int_{\bbS^{n}}\ph^{-\fr 1k} \tilde{p}_{k}d\om}\int_{\bbS^{n}}\ph^{\fr{k-1}{k}}d\omega-\int_{\mathbb{S}^n}\ph \tilde{p}_{k}^{-\fr 1k}d\omega\leq0 .
}
Since $\int_{\bbS^{n}}\tilde{s}\varphi d\om$ is bounded along the flow, \Cref{full-reg-est} implies that $\tilde{M}_t$ subsequentially converges to solutions of
\eq{\label{limit-equation}p_k=\gamma\varphi,~\mbox{where}~\gamma:=\frac{\int_{\bbS^{n}}sp_kd\om}{\int_{\bbS^{n}}s\varphi d\om}.}
Due to \Cref{Monotone Q}, all limits of $\tilde{M}_t$ share the same value for $\gamma$.
By the Alexandrov-Fenchel-Jessen Theorem, the solution of \eqref{limit-equation} is unique up to translations. Since any limit of $\tilde{M}_t$ has its centroid at the origin, the convergence is in fact independent of subsequences.

\section{The weakly convex case and completion of the proof}\label{completion}
In this section, we use a simple approximation together with the constant rank theorem and \Cref{CM-flow} to solve the Christoffel--Minkowski problem under the weaker convexity assumption on $\varphi,$ i.e.,
\eq{\bar \n^{2}\varphi^{-\frac{1}{k}}+\varphi^{-\frac{1}{k}}\bar g\geq  0,\quad \int_{\mathbb{S}^n}u\varphi(u)d\om=0.}
We mention that an approximation argument appeared \cite[Sec. 4]{Sheng2004} to avoid the homotopy assumption \eqref{homotopy} used in \cite{Guan2003a}; however, their argument is more complicated.
The following lemma gives a simpler proof for the validity of this homotopy assumption. We even prove that the homotopy can be chosen to be strictly convex for all $\tau<1$.

\begin{lemma}\label{Approx}
Suppose $1\leq k\leq n $ and $0<\varphi\in C^{\infty}(\mathbb{S}^n)$ satisfies
\eq{\bar \n^{2}\varphi^{-\frac{1}{k}}+\varphi^{-\frac{1}{k}}\bar g\geq  0,\quad \int_{\mathbb{S}^n}u\varphi(u)d\om=0.}
Then for each $\tau\in [0,1)$, there exists $z_{\tau}\in \bbR^{n+1}$, such that
\eq{\varphi_{\tau}(u):=(1-\tau+\tau\varphi^{-\frac{1}{k}}(u)-\langle u,z_{\tau}\rangle)^{-k}}
satisfies
\eq{\bar \n^{2}\varphi_{\tau}^{-\frac{1}{k}}+\varphi_{\tau}^{-\frac{1}{k}}\bar g>  0,\quad \int_{\mathbb{S}^n}u\varphi_{\tau}(u)d\om=0.}
Moreover, we have
\eq{\label{Approx-1}|z_{\tau}|\leq 1+\max\varphi^{-\frac{1}{k}}.}
\end{lemma}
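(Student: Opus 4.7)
The plan is to separately verify the strict convexity condition (which holds automatically for every admissible $z$), construct $z_\tau$ satisfying the integral constraint by implicit function theorem continuation from $\tau = 0$, and extract the norm bound from the admissibility condition. Write $\psi := \ph^{-1/k}$, $h_\tau := 1-\tau+\tau\psi$, and $D_\tau := \{z \in \bbR^{n+1} : \ip{u}{z} < h_\tau(u) \text{ for all } u \in \bbS^{n}\}$ for the open convex set on which $\ph_\tau$ is smooth and positive.

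For the strict convexity, the starting observation is the identity $\bar\n^{2}\ip{u}{z} + \ip{u}{z}\bar g = 0$, valid for every $z \in \bbR^{n+1}$ because $u \mt \ip{u}{z}$ is the support function of the single point $\{z\}$. A direct expansion combined with the hypothesis on $\psi$ then gives
\eq{\bar\n^{2}\ph_\tau^{-1/k} + \ph_\tau^{-1/k}\bar g = (1-\tau)\bar g + \tau\br{\bar\n^{2}\psi + \psi\bar g} \geq (1-\tau)\bar g > 0}
for any $\tau \in [0, 1)$ and any $z \in D_\tau$.

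For the integral constraint, consider
\eq{T(\tau, z) := \int_{\bbS^{n}} u\br{h_\tau(u) - \ip{u}{z}}^{-k}\, d\om}
on $\{(\tau, z) : \tau \in [0, 1),~z \in D_\tau\}$. The $z$-Jacobian $\del_z T = k \int u \otimes u \br{h_\tau - \ip{u}{z}}^{-k-1}\, d\om$ is positive definite, so $T(\tau, \cdot)$ is the gradient of a strictly convex functional $\Psi_\tau$ on $D_\tau$. At $\tau = 0$, $z = 0$ is a solution by $\int u\, d\om = 0$, and the implicit function theorem extends $\tau \mt z_\tau$ to a smooth curve on a maximal interval $[0, \tau^*) \sub [0, 1)$. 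Any $z \in D_\tau$ satisfies $|z| = \ip{z/|z|}{z} < h_\tau(z/|z|) \leq 1 + \max\psi$, which simultaneously supplies the a priori bound needed for continuation and the final estimate $|z_\tau| \leq 1 + \max\psi$.

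To conclude $\tau^* = 1$, I would argue by contradiction. Along a sequence $\tau_n \nearrow \tau^*$, the a priori bound produces a subsequential limit $z^* \in \ov{D_{\tau^*}}$, and the case $z^* \in D_{\tau^*}$ is excluded by the implicit function theorem. Hence $z^* \in \partial D_{\tau^*}$, meaning $g^*(u) := h_{\tau^*}(u) - \ip{u}{z^*}$ vanishes at some $u_0 \in \bbS^{n}$; by the strict convexity established above, this vanishing is non-degenerate quadratic. Dotting the identity $T(\tau_n, z_{\tau_n}) = 0$ with $u_0$ and splitting $\bbS^{n} = \{\ip{u}{u_0} \geq 0\} \cup \{\ip{u}{u_0} < 0\}$—Fatou on the first (positive integrand), dominated convergence on the second (where $g^*$ is bounded away from $0$)—gives $\int_{\bbS^{n}} \ip{u}{u_0} g^{*-k}\, d\om \leq 0$. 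The main obstacle is to violate this inequality: for $2k \geq n$ the integral diverges to $+\infty$ because $g^{*-k}$ has a non-integrable singularity concentrated near $u_0$, producing an immediate contradiction; for $2k < n$ the integral is finite, and one must argue that the concentration of $g^{*-k}$ near $u_0$, where $\ip{u}{u_0}$ is close to $1$, forces its first moment along $u_0$ to be strictly positive.
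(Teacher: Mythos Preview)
Your approach differs substantially from the paper's and has a genuine gap in the range $2k<n$.

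The paper observes that $h_\tau=1-\tau+\tau\psi$ is the support function of a smooth strictly convex body $L_\tau$ and then invokes a variational lemma (\cite[Lem.~3.1]{Ivaki2016b}): there is a unique interior point $z_\tau\in L_\tau$ minimising
\eq{z\longmapsto \begin{cases}-\int_{\bbS^n}\log\bigl(h_\tau-\ip{u}{z}\bigr)\,d\om,& k=1,\\[2pt]\int_{\bbS^n}\bigl(h_\tau-\ip{u}{z}\bigr)^{-(k-1)}\,d\om,& k>1,\end{cases}}
and the Euler--Lagrange equation of this problem is exactly $T(\tau,z_\tau)=0$. No continuation is used; existence of $z_\tau$ is obtained for each $\tau$ separately, and the bound $|z_\tau|\le 1+\max\psi$ follows from $z_\tau\in L_\tau$. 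Your own remark that $T(\tau,\cdot)=\nabla\Psi_\tau$ for a strictly convex $\Psi_\tau$ is precisely this bridge: the point you are tracing by the implicit function theorem is the unique minimiser of $\Psi_\tau$, and the issue is whether this minimiser lies in the interior of $D_\tau$.

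Your continuation argument handles this cleanly when $2k\ge n$, because $\ip{u_0}{T(\tau^*,z)}\to+\infty$ as $z\to z^*\in\partial D_{\tau^*}$. For $2k<n$, however, the integral $\int_{\bbS^n}\ip{u}{u_0}g^{*\,-k}\,d\om$ is finite, and your assertion that it must be strictly positive is not justified. The heuristic ``concentration of $g^{*\,-k}$ near $u_0$'' is not a proof: the positive contribution from a neighbourhood of $u_0$ is finite and must be compared against the (also finite) negative contribution from the hemisphere $\{\ip{u}{u_0}<0\}$, where $g^*$ can be small depending on the shape of $L_{\tau^*}$. Establishing this sign is exactly equivalent to showing that the minimiser of $\Psi_{\tau^*}$ does not sit on $\partial D_{\tau^*}$, which is the content of the cited lemma and requires a separate argument. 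As written, your proof is complete only for $2k\ge n$.
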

\begin{proof}
Note that for $0\leq\tau\leq 1$,
\eq{s_{L_{\tau}}:=1-\tau+\tau\varphi^{-\frac{1}{k}}} is the support function of a convex body $L_{\tau}$, which is smooth and strictly convex for $\tau<1$. By \cite[Lem. 3.1]{Ivaki2016b}, there exists a unique point $z_{\tau}$ in the interior of $L_{\tau}$ such that
\eq{\begin{cases}
        \min\limits_{v\in L_{\tau}}\int_{\mathbb{S}^n}-\log(s_{L_{\tau}-v})d\omega, & k=1 \\
        \min\limits_{v\in L_{\tau}}\int_{\mathbb{S}^n}s_{L_{\tau}-v}^{-k+1}d\omega, & k>1
      \end{cases}
  }
is attained. Hence the support function of $L_{\tau}-z_{\tau}$ given by $\varphi_{\tau}^{-\frac{1}{k}}$ is positive and satisfies the required integral condition, see \cite[Lem.~3.1]{Ivaki2016b}. Since $z_{\tau}$ is in the interior of $L_{\tau}$, the upper bound on the norm of $z_{\tau}$ follows.
\end{proof}

We complete the proof of \Cref{CMP}.

\begin{proof}[Proof of \Cref{CMP}]
Consider the family of functions $\varphi_{\tau}$ in the previous lemma. Due to \Cref{CM-flow}, for each $\tau\neq 1$, there exists a smooth, strictly convex hypersurface with support function $s_{\tau}$ that solves
\eq{p_k(\bar \n^{2}s_{\tau}+s_{\tau}\bar g )=\varphi_{\tau}.}
Since $L_{\tau}$ converges to $L_1$ in the Hausdorff distance as $\tau \to 1$, by \cite[Thm 7.1]{Ivaki2016b} we have $\lim_{\tau\to 1}z_{\tau}=0.$ This in turn implies that $\varphi_{\tau}$ converges smoothly to $\varphi$.

Now due to \cite[Thm. 3.3]{Guan2003a}, we have uniform $C^{k}$ estimates on $s_{\tau}$, independent of $\tau.$
Therefore, a subsequence of $s_{\tau}$ converges to a smooth function $s$ such that $\bar \n^{2}s+s\bar g \geq 0$ and
\eq{p_k(\bar \n^{2}s+s\bar g )=\varphi.}
By the constant rank theorem \cite[Thm.~1.2]{Guan2003a}, we have $\bar \n^{2}s+s\bar g> 0$ and the corresponding hypersurface is strictly convex.
\end{proof}

\section*{Acknowledgment}
PB was supported by the ARC within the research grant ``Analysis of fully non-linear geometric problems and differential equations", number DE180100110. MI was supported by a Jerrold E. Marsden postdoctoral fellowship from the Fields Institute. JS was supported by the ``Deutsche Forschungsgemeinschaft" (DFG, German research foundation) within the research scholarship ``Quermassintegral preserving local curvature flows", grant number SCHE 1879/3-1.

\bibliographystyle{amsalpha-nobysame}
\bibliography{library,Bibliography}
\vspace{10mm}

\textsc{Department of Mathematics, Macquarie University,\\ NSW 2109, Australia, }\email{\href{mailto:paul.bryan@mq.edu.au}{paul.bryan@mq.edu.au}}

\vspace{2mm}

\textsc{Department of Mathematics, University of Toronto,\\ Ontario, M5S 2E4, Canada, }\email{\href{mailto:m.ivaki@utoronto.ca}{m.ivaki@utoronto.ca}}

\vspace{2mm}

\textsc{Department of Mathematics, Columbia University,\\ New York, NY 10027, USA,} \email{\href{mailto:julian.scheuer@math.uni-freiburg.de}{julian.scheuer@math.uni-freiburg.de}}
\end{document}